\documentclass[a4paper, 11pt]{article}

\usepackage{soul}
\usepackage[utf8]{inputenc}
\usepackage{hyperref}
\usepackage{xcolor}
\usepackage{amsmath}
\usepackage{amssymb}
\usepackage{authblk}
\usepackage{lipsum}
\usepackage{geometry}
\usepackage{comment}
\geometry{a4paper,
 total={170mm,257mm},
 left=25mm,
 right=25mm,
 top=20mm,
 bottom=20mm,}
 
\newenvironment{proof}{\paragraph{Proof.}}{\hfill$\square$}
\newtheorem{theorem}{Theorem}[section]
\numberwithin{theorem}{section}
\newtheorem{lemma}[theorem]{Lemma}
\newtheorem{definition}[theorem]{Definition}
\newtheorem{corollary}[theorem]{Corollary}
\newtheorem{proposition}[theorem]{Proposition}
\newtheorem{remark}[theorem]{Remark}
\newtheorem{example}[theorem]{Example}

\newcommand{\Z}{{\mathbb{Z}}}
\newcommand{\BH}{\operatorname{BH}}

\linespread{1.25}

\begin{document}

\title{Generalized partially bent functions, 
generalized perfect arrays and 
cocyclic Butson matrices}
\date{}
\author[1]{J. A. Armario}
\author[2]{R. Egan}
\author[3]{D.~L.~Flannery}
\affil[1]{\small 
Departamento de Matem\'atica Aplicada I, 
Universidad de Sevilla,
Avda. Reina Mercedes s/n, 
41012 Sevilla, Spain}
\affil[2]{\small School of Mathematical Sciences, 
Dublin City University, Ireland}
\affil[3]{\small School of Mathematical and
Statistical Sciences,
 University of Galway,
Ireland}

\maketitle
\begin{abstract}
In a recent survey, Schmidt compiled 
equivalences between generalized 
bent functions, group invariant 
  Butson Hadamard matrices, and 
abelian splitting relative 
 difference sets.
We establish a broader network of 
equivalences
by considering Butson matrices 
that are cocyclic rather than strictly 
group invariant.
This result has several 
applications; for example,
to the construction of Boolean functions 
whose expansions are generalized partially 
bent functions, including cases where no 
bent function can exist.
\end{abstract}

\thispagestyle{empty}

\section{Introduction}
Let $f\colon \Z_{2}^{n} \rightarrow \Z_{2}$ 
be a Boolean function
with $n$ a positive integer, and set
$F(v) = (-1)^{f(v)}$ for
$v \in \Z_{2}^{n}$
(throughout, we view 
$\Z_t$ for an integer
$t>1$ as $\{0,1,\ldots, t-1\}$
under addition modulo $t$).
 The Walsh--Hadamard 
transform $\hat{F}$ of $F$ is defined by
\[
\hat{F}(u) = 
\sum_{v \in \Z_{2}^n}
 (-1)^{u\cdot v}
F(v),
\]
where $u\cdot v$ denotes the inner
product $uv^\top$ of $u$ and $v$.
The Boolean function $f$ is \emph{bent} 
if $|\hat{F}(u)|$ is constant for all
$u \in \Z_{2}^{n}$. Parseval's theorem
(see, e.g., \cite[(8.36), p.~322]{ParsevalThm})
gives 
\[
\sum_{v \in \Z_{2}^{n}}\hat{F}(v)^2 = 2^{2n}.
\]
Hence, $f$ can be bent only if $n$
is even.

A bent function is so-called because it is as 
far from being linear as possible. 
These highly non-linear functions are extremely 
useful for encryption, 
offering a robust defence against linear 
cryptanalysis~\cite[Chapter 3]{Horadam}.

Bent functions are equivalent to certain
Hadamard matrices and difference sets;
see, e.g., 
\cite[Lemma~14.3.2]{MW-S} and
\cite[Corollary~3.30]{Horadam}. 
The concept has been generalized, yielding
 equivalences between various associated
objects. Indeed, our paper is inspired by 
Schmidt's survey~\cite{Sch19}, which describes 
equivalences between generalized bent functions, 
group invariant Butson Hadamard matrices, 
and  splitting relative difference sets. 
There is also a connection to perfect arrays, 
not covered in \cite{Sch19}.

Our goal is to study how the aforementioned 
equivalences are affected when the property of 
being group invariant is broadened to 
\emph{cocyclic development}.
We now outline the paper. 
Preliminary definitions and results 
are given in Section~\ref{sectionB}. 
Section~\ref{sec:further} is devoted to
generalized perfect 
arrays and generalized partially bent 
functions. In Section~\ref{sec:main}, 
we prove the main theorem: a
series of equivalences between 
 cocyclic Butson 
 Hadamard matrices, generalized perfect 
 arrays, non-splitting relative 
 difference sets, generalized 
 plateaued functions, and generalized 
 partially bent functions. 
(For certain parameters, the 
 equivalences that we exhibit have 
 those in \cite{Sch19} as special cases.)
Section~\ref{sec:examples} 
 contains examples
 illustrating the main
 theorem.

\section{Background}
\label{sectionB}

We adopt the following definition 
 from \cite{Sch19}.
For positive  integers $q, m, h$, and
$\zeta_k$ the complex $k^{\rm th}$ 
root of unity $\exp{(2\pi \sqrt{-1} /k)}$,
 a map $f\colon {\mathbb{Z}}_{q}^m 
\rightarrow {\mathbb{Z}}_{h}$ is 
a {\em generalized bent function (GBF)} if
\[     
\Big|\sum_{x\in{\mathbb{Z}}_{q}^m} 
\zeta_h^{f(x)} \zeta_q^{-w\cdot x}
\Big|^2=q^m\quad \forall \, w\in 
{\mathbb{Z}}_{q}^m,
\]
where $|z|$ as usual denotes the 
modulus of $z\in \mathbb C$. 
Thus, a GBF for $q=h=2$ and even $m$ 
 is a bent function. For $h=q$, 
Kumar, Scholtz, and Welch~\cite{KSW85} 
prove that GBFs exist if $m$ is 
even or $q\not \equiv 2 \, \bmod 4$.
However, no GBF with $h=q$, $m$ odd,
 and $q \equiv 2 \, \bmod 4$ is 
known~\cite[p.~2]{LeungSchmidt19}.

A further generalization
is relevant to our paper. 
If the values of
\[     
\Big|\sum_{x\in{\mathbb{Z}}_{q}^m} 
\zeta_h^{f(x)} \zeta_q^{-w\cdot x}
\Big|^2
\]
as $w$ ranges over $\Z_{q}^{m}$
lie in $\{ 0,\alpha\}$ 
for a single non-zero $\alpha$,
then $f$ is a
\emph{generalized plateaued function}. 
Mesnager, Tang, and Qi~\cite{MTQ17}
discuss such functions under the 
conditions that $q$ is prime and 
$h$ is a $q$-power.
They call $f$ an 
\emph{$s$-generalized plateaued function} 
when $\alpha$ has the form 
$q^{m+s}$.

We examine the role of GBFs and generalized 
plateaued functions within 
cocyclic design theory~\cite{ADT,Horadam}.
Some requisite definitions follow.
Let $G$ and $U$ be finite groups, with $U$ 
abelian. A map 
$\psi\colon G\times G\rightarrow U$ such that
\[
\psi(a,b)\psi(ab,c)=\psi(a,bc)\psi(b,c) 
\quad\forall \, a,b,c\in G
\]
is a \emph{cocycle} (\emph{over $G$},
\emph{with coefficients in $U$}). 
All cocycles $\psi$ 
are assumed to be {\em normalized},
meaning that $\psi(1,1)=1$. 
For any (normalized) map 
$\phi\colon G\rightarrow U$, 
the cocycle $\partial\phi$ defined by 
$\partial\phi(a,b)=
\phi(a)^{-1}\phi(b)^{-1}\phi(ab)$ is 
a {\em coboundary}. The set of
all cocycles 
$\psi\colon\allowbreak G\times G
\rightarrow U$ equipped with pointwise 
multiplication is an abelian group, 
$Z^2(G,U)$. 
Factoring out $Z^2(G,U)$ by 
the subgroup $B^2(G,U)$ of coboundaries 
gives the \emph{second cohomology group},
$H^2(G,U)$. The elements of $H^2(G,U)$,
namely cosets of $B^2(G,U)$, are 
\emph{cohomology classes}.
Each $\psi\in Z^2(G,U)$ 
is displayed as a {\em cocyclic matrix}
$M_\psi$. That is, under an 
indexing of rows and columns by 
the elements of $G$, the $|G|\times |G|$ 
matrix $M_\psi$ has entry $\psi(a,b)$ 
in position $(a,b)$.
We focus on abelian
$G$ and cyclic $U$;
say $G={\mathbb{Z}}_{s_1}
\times\cdots\times {\mathbb{Z}}_{s_m}$  
and $U=\langle \zeta_h\rangle\cong \Z_h$, 
where $\langle \zeta_{h}\rangle:=
\{\zeta_{h}^{i} \; | \; 0 \leq i \leq h-1\}$
is generated (multiplicatively)
 by $\zeta_h$.

Denote the set of $n \times n$ matrices with 
entries in a set $S$ by $\mathcal{M}_{n}(S)$.
A matrix $M\in  \mathcal{M}_{n}
(\langle \zeta_{k} \rangle)$ is a 
{\em Butson 
(Hadamard) matrix} if
$M M^*=nI_n$, where $I_n$ is 
the $n\times n$ identity matrix and 
$M^*$ is the complex conjugate 
transpose of $M$. We write $\BH(n,k)$ to
denote the (possibly empty) 
set of all Butson matrices
in $\mathcal{M}_{n}
(\langle \zeta_{k} \rangle)$.
For example, at every order $n$ we 
have the Fourier matrix 
$\big[
\zeta_n^{(i-1)(j-1)}
\big]_{i,j=1}^n\in \BH(n,n)$.
Hadamard matrices of order $n$
are the elements of $\BH(n,2)$.
We quote a number-theoretic 
constraint on the existence of
elements of $\BH(n,k)$.

\vspace{3.5pt}

\begin{theorem}[{\cite[Theorem~2.8.4]{ADT}}]
If $\BH(n,k)\neq \emptyset$ and 
$p_{1},\ldots,p_{r}$ are the primes 
dividing $k$, then 
$n = a_{1}p_{1} + \cdots + a_{r}p_{r}$ 
for some non-negative integers 
 $a_1,\ldots , a_r$.
\end{theorem}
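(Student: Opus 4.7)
The plan is to extract a vanishing sum of $n$ $k$-th roots of unity from the orthogonality relation defining a Butson matrix, and then invoke the classical classification of such vanishing sums.

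Assume $n \geq 2$ (the case $n=1$ being excluded by convention, as a single root of unity never vanishes). Fix any two distinct row indices $i \neq j$. Reading off the $(i,j)$-entry of $MM^{*} = nI_{n}$ gives
\[
\sum_{\ell=1}^{n} M_{i\ell}\,\overline{M_{j\ell}} = 0.
\]
Each summand is a product of two elements of $\langle \zeta_{k}\rangle$, hence itself lies in $\langle \zeta_{k}\rangle$. This exhibits $n$ elements of $\langle \zeta_{k}\rangle$ whose sum is zero.

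The conclusion is then immediate from the Lam--Leung theorem on vanishing sums of roots of unity: if $\omega_{1} + \cdots + \omega_{n} = 0$ with each $\omega_{\ell} \in \langle \zeta_{k}\rangle$, and $p_{1},\ldots,p_{r}$ are the primes dividing $k$, then $n = a_{1}p_{1} + \cdots + a_{r}p_{r}$ for non-negative integers $a_{i}$. The reduction from Butson orthogonality to a vanishing sum is a one-liner; essentially all of the substance sits in the Lam--Leung step.

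I would expect the latter to proceed by induction on the number of distinct prime divisors of $k$. The base case $k = p^{a}$ exploits the fact that $\Phi_{p}(x) = 1 + x + \cdots + x^{p-1}$ is the minimal polynomial of $\zeta_{p}$ over the smaller cyclotomic field generated by $\zeta_{p^{a-1}}$, which forces a vanishing sum to partition into blocks of size divisible by $p$. The inductive step uses a coprime factorisation $k = p^{a}m$ and groups summands according to their component in the $p$-power part, reducing to a vanishing sum of $m$-th roots of unity to which the inductive hypothesis applies. This cyclotomic-field induction is the main obstacle; once it is in hand, the theorem follows with no further work.
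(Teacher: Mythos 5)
The paper states this result without proof, citing \cite[Theorem~2.8.4]{ADT}; your argument---reading off the orthogonality of two distinct rows to obtain a vanishing sum of $n$ elements of $\langle\zeta_k\rangle$ and then invoking the Lam--Leung theorem---is exactly the standard proof given in that source, and it is correct. (Your aside about $n=1$ correctly flags a degenerate case of the statement as written, and the sketch of the Lam--Leung induction is accurate, though that theorem would normally just be cited rather than reproved.)
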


\vspace{3.5pt}

Matrices $H, H'\in \mathcal{M}_{n}
(\langle \zeta_{k} \rangle)$
are \textit{equivalent} if 
$PHQ^{*} = H'$ for monomial matrices 
 $P,\allowbreak Q\in \mathcal{M}_n
( \langle \zeta_k\rangle\cup \{0\})$.
This equivalence relation 
induces a partition of $\BH(n,k)$.

Our interest is in 
cocyclic Butson matrices.
Let $G$ be a group of order $n$.
A cocycle
$\psi\in Z^2(G,\langle \zeta_k\rangle)$ 
such that $M_\psi\in \BH(n,k)$ is 
{\em orthogonal}.
In particular, group invariant 
Butson matrices are cocyclic.
The orthogonal cocycles involved 
here are coboundaries, as we now explain.
A matrix $X\in \mathcal{M}_{n}(U)$ 
is \emph{group invariant}, over 
$G$, if $X=[x_{a,b}]_{a,b\in G}$ and 
$x_{ac,bc} = x_{a,b}$ for all $a,b,c\in G$.
Such a matrix $X$ 
is equivalent to a
\emph{group-developed matrix} 
$[\chi(ab)]_{a,b\in G}$
for some map
$\chi\colon G\rightarrow U$;
 in turn $[\chi(ab)]$ is equivalent to
$M_{\partial\chi}$. 
A group-developed Butson matrix 
has constant row and column sum
(in $\mathbb C$).
There are much stronger restrictions 
on the existence of group-developed 
elements of $\BH(n,k)$.

\vspace{3.5pt}

\begin{lemma}[{\cite[Lemma~5.2]{EFO15}}]
\label{GrpDevConstraint}
Set $r_{j} = \mathrm{Re}(\zeta_k^j)$ 
and $s_{j} =\mathrm{Im}(\zeta_k^j)$. A
matrix in 
$\BH(n,k)$ with constant row and 
column sums exists only if there
are $x_{0},\ldots, x_{k-1} \in 
\{0,1,\ldots,n\}$ such that
$\big(
\sum_{j=0}^{k-1} r_{j}x_{j}\big)^{2} +
\big(\sum_{j=0}^{k-1}s_{j}x_{j}\big)^{2} = n$
and $\sum_{j=0}^{k-1} x_{j} = n$.
\end{lemma}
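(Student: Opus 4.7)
My plan is to recognize the two equations in the conclusion as (i) a row-length count and (ii) the squared modulus of a common row (and column) sum $\sigma$, then show that $|\sigma|^2 = n$ by applying the defining identity $MM^* = nI_n$ to the all-ones eigenvector.

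First I would observe that if $M \in \BH(n,k)$ has constant row sum $\sigma_R$ and constant column sum $\sigma_C$, then summing all entries of $M$ in two ways gives $n\sigma_R = n\sigma_C$, so these share a common value $\sigma$. Let $\mathbf{1}$ denote the all-ones column vector in $\mathbb{C}^n$. The constant row-sum condition says $M\mathbf{1} = \sigma \mathbf{1}$, while the constant column-sum condition gives $\mathbf{1}^T M = \sigma \mathbf{1}^T$, whence $M^*\mathbf{1} = \bar\sigma \mathbf{1}$. Applying $MM^* = n I_n$ to $\mathbf{1}$ then yields $|\sigma|^2 \mathbf{1} = n \mathbf{1}$, so $|\sigma|^2 = n$.

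Next I would fix any single row of $M$ and, for $0 \le j \le k-1$, define $x_j$ to be the number of entries in that row equal to $\zeta_k^j$. These integers lie in $\{0, 1, \ldots, n\}$ and satisfy $\sum_{j=0}^{k-1} x_j = n$. The row sum decomposes as
\[
\sigma \;=\; \sum_{j=0}^{k-1} x_j\, \zeta_k^j \;=\; \sum_{j=0}^{k-1} r_j x_j \;+\; i \sum_{j=0}^{k-1} s_j x_j,
\]
so taking squared modulus and using $|\sigma|^2 = n$ delivers the displayed identity.

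There is no serious obstacle here; the only point requiring any thought is the mild verification that the constant row sum and constant column sum must agree, so that a single value $\sigma$ with $|\sigma|^2 = n$ can be extracted. Once this is noted, the rest is the eigenvector computation and a multiset count of roots of unity in one row.
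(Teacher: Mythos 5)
Your proof is correct. The paper does not actually prove this lemma---it is quoted directly from \cite[Lemma~5.2]{EFO15} with no argument given---but your reasoning (extracting the common row/column sum $\sigma$, computing $MM^*\mathbf{1}=|\sigma|^2\mathbf{1}=n\mathbf{1}$ to get $|\sigma|^2=n$, and then writing $\sigma=\sum_j x_j\zeta_k^j$ with $x_j$ the multiplicity of $\zeta_k^j$ in a row) is the standard, complete proof of the stated constraint.
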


\vspace{3.5pt}

It follows from 
Lemma~\ref{GrpDevConstraint} that if 
$k=2$ then $n$ is an integer square, 
and if $k=4$ then $n$ is the sum of 
two integer squares.

Cocyclic designs give rise to relative
difference sets, and 
vice  versa~\cite[Sections~10.4, 15.4]{ADT}.
 Let $E$ be a group with normal 
 subgroup $N$, where $|N|=n$ and 
  $|E:N|=v$.  
 A \emph{$(v,n,k,\lambda)$-relative 
 difference set in $E$ relative to $N$} 
 (the \textit{forbidden subgroup})
is a $k$-subset $R$ of 
a transversal for $N$ in $E$ such that
$|R\cap xR|=\lambda$
for all $x\in E\setminus N$.
We call $R$  {\em abelian} if $E$ is 
abelian, and {\em splitting}
if $N$ is a direct factor of $E$.

The final piece of 
 background concerns arrays.
Let ${\bf s}= (s_1,\ldots,s_m)$ be an 
$m$-tuple of integers $s_i>1$,
and let $G={\mathbb{Z}}_{s_1}
\times\cdots\times {\mathbb{Z}}_{s_m}$. 
A {\em $h$-ary ${\bf s}$-array}
is just a set map 
$\phi \colon 
G\rightarrow \allowbreak \Z_h$
(normalized when necessary). 
If $h=2$, then the array is 
\textit{binary}.
For $w\in G$,  the 
{\em periodic autocorrelation 
of $\phi$ at shift $w$},
denoted $AC_{\phi}(w)$, is 
defined by
\[
AC_{\phi}(w)=
\sum_{g\in  G} 
\zeta_h^{\phi(g)-\phi(g+w)}.
\]
If $AC_{\phi}(w) = 0$ for all $w \neq 0$, 
then $\phi$ is \textit{perfect}.

\vspace{3.5pt}

\begin{lemma}\label{corredftWT}
Let $D_m$ be the $m^{th}$ Kronecker 
power of the $q\times q$ Fourier matrix, 
i.e.,  $(D_m)_{i,j}=
\zeta_q^{\alpha_{i-1} \cdot \alpha_{j-1}}$,
where $\alpha_0=(0,\ldots,0), 
\alpha_1=(0,0,\ldots, 1),\ldots, 
\alpha_{q^m-1}=(q-1,\ldots,q-1)$. 
Then, for any map
$\phi\colon  {\mathbb{Z}}_{q}^m \rightarrow 
{\mathbb{Z}}_{h}$,
\[
(AC_{\phi}(\alpha_0),\ldots, 
AC_{\phi}(\alpha_{q^m-1})) D_m =
\big(
\Big|\! \sum_{x\in{\mathbb{Z}}_{q}^m} 
\zeta_h^{\phi(x)} \zeta_q^{-\alpha_0 \cdot x}
\Big|^2, \ldots, \Big|\! 
\sum_{x\in{\mathbb{Z}}_{q}^m} 
\zeta_h^{\phi(x)} 
\zeta_q^{-\alpha_{q^m-1} \cdot x}
\Big|^2\big) .
\]
\end{lemma}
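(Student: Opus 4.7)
The plan is to verify the claimed vector identity entrywise. Fix $j$ and set $u = \alpha_{j-1}$. Since $\alpha_0,\ldots,\alpha_{q^m-1}$ enumerate $\Z_q^m$, the $j$-th coordinate of the left-hand product equals
\[
\sum_{i=1}^{q^m} AC_\phi(\alpha_{i-1})\, \zeta_q^{\alpha_{i-1}\cdot \alpha_{j-1}} = \sum_{w\in\Z_q^m} AC_\phi(w)\, \zeta_q^{u\cdot w}.
\]
Writing $S(u) := \sum_{x\in \Z_q^m} \zeta_h^{\phi(x)} \zeta_q^{-u\cdot x}$, the $j$-th coordinate of the right-hand side is $|S(u)|^2$. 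Thus the whole claim reduces to showing the scalar identity $|S(u)|^2 = \sum_{w\in\Z_q^m} AC_\phi(w)\, \zeta_q^{u\cdot w}$ for every $u\in\Z_q^m$.

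To establish this, the plan is to expand $|S(u)|^2 = S(u)\overline{S(u)}$ as a double sum over $x, y \in \Z_q^m$:
\[
|S(u)|^2 = \sum_{x,y\in\Z_q^m} \zeta_h^{\phi(x)-\phi(y)}\, \zeta_q^{u\cdot(y-x)},
\]
and then change variables via $g = x$, $w = y - x$ (both ranging independently over $\Z_q^m$). This rewrites the right-hand side as $\sum_w \zeta_q^{u\cdot w} \sum_g \zeta_h^{\phi(g)-\phi(g+w)}$, whose inner sum is $AC_\phi(w)$ by definition, finishing the reduction. This is essentially the standard Wiener--Khinchin-type identity relating autocorrelation to the power spectrum.

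The main obstacle is not conceptual — the entire argument is a few lines of routine manipulation — but bookkeeping of signs. One must check that the exponent $-u\cdot x$ in the definition of $S$, combined with complex conjugation of the $y$-factor, produces $\zeta_q^{+u\cdot(y-x)}$, so that after substitution the weight $\zeta_q^{u\cdot w}$ matches the entries $\zeta_q^{\alpha_{i-1}\cdot \alpha_{j-1}}$ of $D_m$ (and not those of $\overline{D_m}$); and similarly that $\zeta_h^{\phi(x)}\,\overline{\zeta_h^{\phi(y)}} = \zeta_h^{\phi(x)-\phi(y)}$ aligns with the sign convention in the definition of $AC_\phi$. Once these are aligned, no further ingredients are needed.
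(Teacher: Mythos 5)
Your proposal is correct and is essentially the paper's own argument run in reverse: the paper starts from $\sum_i AC_\phi(\alpha_i)\zeta_q^{\alpha_i\cdot\alpha_j}$ and re-indexes the double sum (replacing $\alpha_i$ by $\alpha_i-\alpha_k$) to factor it as $|S(\alpha_j)|^2$, while you expand $|S(u)|^2$ and substitute $w=y-x$ to recover the autocorrelation sum. The sign bookkeeping you flag does work out exactly as you describe, so nothing further is needed.
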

\begin{proof}
We adapt the proof of the 
lemma (for Boolean functions)
in \cite[Section~2]{Car93}. 
First, 
\[
\sum_{i\geq 0} AC_\phi(\alpha_i) 
\zeta_q^{\alpha_i\cdot \alpha_j}= \sum_{i\geq 0}  
\sum_{k\geq 0}\zeta_h^{\phi(\alpha_k)-
\phi(\alpha_k+\alpha_i) }  
\zeta_q^{\alpha_i\cdot \alpha_j}.
\]
After replacing $\alpha_i$ by 
$\alpha_i-\alpha_k$, the double 
summation becomes
\begin{align*}
\sum_{i\geq 0}  \sum_{k\geq 0}
\zeta_h^{\phi(\alpha_k)-\phi(\alpha_i) }  
\zeta_q^{\alpha_i\cdot \alpha_j- 
\alpha_k\cdot \alpha_j} 
& =\sum_{k\geq 0}\zeta_h^{\phi(\alpha_k) }  
\zeta_q^{- \alpha_k\cdot \alpha_j} 
\sum_{i\geq 0}\zeta_h^{-\phi(\alpha_i) }  
\zeta_q^{\alpha_i\cdot \alpha_j}\\
& =  \Big|\sum_{x\in{\mathbb{Z}}_{q}^m} 
\zeta_h^{\phi(x)} 
\zeta_q^{-\alpha_{j} \cdot x} \Big|^2,
\end{align*} 
as required.
\end{proof}

\vspace{10pt}

Our fundamental motivating result is
extracted mostly from \cite{Sch19}.

\vspace{2pt}

\begin{theorem} \label{prop-ben-perfe-rds}
Let $f\colon {\mathbb{Z}}_{q}^m 
 \rightarrow {\mathbb{Z}}_{h}$ be a map.
 The following  are equivalent$:$
\begin{enumerate}
\item[{\rm (1)}] $f$ is a GBF$;$
\item[{\rm (2)}] 
$M_{\partial f}\in \mathrm{BH}(q^m,h);$ 
\item[{\rm (3)}] $f$ is a perfect 
$h$-ary $(q,\ldots, q)$-array. 
\end{enumerate}
Additionally, if $h$ is  
prime and divides $q^m$, then 
 {\rm (1)}--{\rm (3)} 
are equivalent to
\begin{enumerate}
\item[{\rm (4)}] $\{(f(x),x) 
\; | \; x\in {\mathbb{Z}}_{q}^m\}$ 
 is a splitting 
$(q^m,h,q^m,q^m/h)$-relative difference set in 
${\mathbb{Z}}_{h}\times {\mathbb{Z}}_{q}^m $.
\end{enumerate}    
\end{theorem}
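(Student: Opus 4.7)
The plan is to prove (1)$\Leftrightarrow$(3), (2)$\Leftrightarrow$(3), and then, under the extra hypothesis on $h$, (3)$\Leftrightarrow$(4), rather than chasing all pairs. For (1)$\Leftrightarrow$(3) I would invoke Lemma~\ref{corredftWT}: since $AC_f(0) = q^m$ for every $f$, condition (3) asserts that the left-hand vector in that lemma equals $(q^m, 0, \ldots, 0)$; the first row of $D_m$ is the all-ones vector (because $\alpha_0 = 0$), so this vector maps under $D_m$ to $q^m(1, \ldots, 1)$, which is exactly the right-hand side predicted by (1). Since $D_m$ is a Kronecker power of the non-singular Fourier matrix, hence invertible, the implication reverses.

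For (2)$\Leftrightarrow$(3) I would compute the $(a,c)$-entry of $M_{\partial f} M_{\partial f}^*$ directly. Using $\partial f(a,b) = \zeta_h^{-f(a)-f(b)+f(a+b)}$ and the substitution $y = a+b$, that entry collapses to $\zeta_h^{f(c)-f(a)} AC_f(c-a)$. Thus $M_{\partial f} \in \BH(q^m, h)$ iff $AC_f(w) = 0$ for every $w \neq 0$, which is (3); the diagonal case $a=c$ gives $AC_f(0) = q^m$ automatically.

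For (3)$\Leftrightarrow$(4), set $E = \Z_h \times \Z_q^m$ and $N = \Z_h \times \{0\}$. The set $R = \{(f(x), x) : x \in \Z_q^m\}$ is a transversal of $N$ in $E$ with $|R| = q^m$, and no element of $N \setminus \{0\}$ arises as a difference $r_1 - r_2$ since the second coordinates of members of $R$ are distinct. For $(a, w) \in E$ with $w \neq 0$, a short count shows that the number of representations $r_1 - r_2 = (a, w)$ equals $N_{a,w} := |\{x : f(x+w) - f(x) = a\}|$, so (4) demands $N_{a,w} = q^m/h$ for every such $(a,w)$ (which presupposes $h \mid q^m$). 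Since $AC_f(w) = \sum_{a \in \Z_h} N_{a,w}\, \zeta_h^{-a}$ with $\sum_a N_{a,w} = q^m$ fixed, and for $h$ prime the only $\Z$-linear relation among $1, \zeta_h, \ldots, \zeta_h^{h-1}$ is $1 + \zeta_h + \cdots + \zeta_h^{h-1} = 0$, the condition $AC_f(w) = 0$ forces all $N_{a,w}$ equal; the reverse direction is immediate from $\sum_a \zeta_h^{-a} = 0$.

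The main obstacle is the step (3)$\Rightarrow$(4): passing from $AC_f(w) = 0$ to equidistribution of the differences uses the primality of $h$ essentially. For composite $h$ the relation $\zeta_4 + \zeta_4^3 = 0$ already shows that $AC_f(w)$ can vanish without equidistribution, so this step would genuinely fail without the hypothesis. The other equivalences are purely formal, following once the Fourier and autocorrelation identities above are in place.
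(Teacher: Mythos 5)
Your proof is correct, but it takes a more self-contained route than the paper for most of the theorem. The paper proves $(1)\Leftrightarrow(3)$ exactly as you do, via Lemma~\ref{corredftWT} and the invertibility of $D_m$; however, for $(1)\Leftrightarrow(2)\Leftrightarrow(4)$ it simply cites Propositions~2.3 and 2.7 of Schmidt's survey \cite{Sch19}, noting only that primality of $h$ is what secures $(2)\Rightarrow(4)$. You instead prove $(2)\Leftrightarrow(3)$ by computing the Gram matrix of $M_{\partial f}$ (its $(a,c)$-entry being $\zeta_h^{f(c)-f(a)}AC_f(c-a)$, which is the right identity), and $(3)\Leftrightarrow(4)$ by counting difference representations and invoking the fact that for prime $h$ a vanishing integral combination of $1,\zeta_h,\ldots,\zeta_h^{h-1}$ has all coefficients equal --- precisely the paper's Lemma~\ref{lemmahprimevanish}. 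What your route buys is transparency: the autocorrelation $AC_f$ becomes the single currency linking all four conditions, the role of $h$ prime and $h\mid q^m$ is isolated exactly where it matters (equidistribution of the counts $N_{a,w}$), and your $(3)\Leftrightarrow(4)$ argument is in effect the splitting special case of the paper's own later Theorem~\ref{th-gpha-rds}, so it foreshadows the main result rather than relying on an external reference. The cost is length; the paper's citation-based proof is two lines. Both are valid, and your decomposition through (3) rather than through (1)--(2) is a legitimate reorganization since equivalence is transitive.
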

\begin{proof}
The equivalences $(1)\Leftrightarrow (2) 
\Leftrightarrow (4)$ come from 
Propositions~2.3  and 2.7 of \cite{Sch19}
($h$ prime is a sufficient
condition to ensure $(2)\Rightarrow (4)$). 
Lemma \ref{corredftWT}
implies $(1)\Leftrightarrow (3)$.
\end{proof}

\vspace{10pt}

We investigate the effect on  
Theorem~\ref{prop-ben-perfe-rds} when 
non-coboundary cocyclic Butson matrices,
generalized perfect arrays, and
non-splitting abelian 
relative difference sets 
 are considered in
 (2), (3), (4), respectively.
To this end, we need some material
of a  more specialized nature,
which is presented 
over the next two sections.

\section{More on arrays and bent 
functions}\label{sec:further}

There is an
equivalence between binary arrays 
and non-splitting abelian relative 
difference sets, as set out in
\cite{Jed92}.
Subsequently, a bridge 
to the theory of cocyclic Hadamard 
matrices was identified~\cite{Hug00}. 
The main tool here is the
notion of a generalized perfect 
binary array (GPBA).
Guided by \cite[Section~3]{GOBA}, 
 we extend the notion of GPBA from 
 binary to $h$-ary arrays, $h\geq 2$, 
and show how this conforms with a variant
of  bent functions.

\vspace{2pt}

\begin{definition}\label{defexph-ary}
{\em 
Let $\phi\colon G \rightarrow \mathbb{Z}_{h}$
be an {\bf s}-array, 
where ${\bf s}= (s_1,\ldots,s_m)$ and 
$G={\mathbb{Z}}_{s_1}
\times\cdots\allowbreak \times {\mathbb{Z}}_{s_m}$.
Let ${\bf z}=(z_1,\ldots,z_m)\in 
\{ 0,1\}^m$. 
The {\em expansion of $\phi$
 of type ${\bf z}$} is the map $\phi'$ 
from $E:=\allowbreak \mathbb{Z}_{(z_1(h-1)+1)s_1}
\times \cdots\times
\mathbb{Z}_{(z_m(h-1)+1)s_m}$ to
$\mathbb{Z}_{h}$ defined by
\[
\phi'\colon (g_1, \ldots , g_m) \mapsto
\phi(a) +b  \, \bmod h ,
\]
where $b=\sum_{i=1}^m \lfloor g_i/s_i\rfloor$
and $a\equiv (g_1, \ldots , g_m) \, 
 \bmod  {\bf{s}}$, 
i.e., $a =  (g_1 \, \bmod s_1, 
\ldots, \allowbreak 
g_m  \, \bmod s_m)$.}
\end{definition}

We distinguish two subgroups of the extension 
group $E$ in Definition~\ref{defexph-ary}:
\[\begin{array}{l}
L=\{(g_1,\ldots,g_m)\in E \hspace{.5pt} \ | \ 
g_i=y_is_i \
\mbox{with $0\leq y_i< h$ if $z_i=1$, 
and $y_i=0$ if $z_i=0$} \},\\
K=\{ (g_1,\ldots,g_m)\in L\ |\
\mbox{$\sum_i (g_i/s_i)
\equiv 0  \, \bmod h$}\}.
\end{array}
\]
Note that
\begin{itemize}
\item $L\cong\Z_h^n$ where
$n=\mathrm{wt}({\bf z})=\sum_i z_i$;
\item $E/L\cong G$;
\item if ${\bf z}\neq {\bf 0}$ then 
$L/K= \langle 
(0,\ldots , 0, s_i, 0 , \ldots , 0) +K\rangle
\cong\Z_h$, for any $i$
such that $z_i=1$.
\end{itemize}

\begin{lemma}\label{lem-phi}
Let $\phi$ be  a $h$-ary 
$(s_1,\ldots , s_m)$-array with expansion 
$\phi'\colon E\rightarrow \Z_h$. 
If $e\in E$ and $g=\allowbreak
(g_1,\ldots , g_m)\in L$, 
then $\phi'(e+g)\equiv \phi'(e)+b \, \bmod h$ 
where $b=\sum_i g_i/s_i$. 
\end{lemma}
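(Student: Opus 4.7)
The plan is to unwind the definition of the expansion coordinate by coordinate and track how addition by an element of $L$ affects the two ingredients of $\phi'$: the ``reduction'' $a$ and the ``quotient'' $b'=\sum_i\lfloor g_i/s_i\rfloor$. The only subtlety is the modular reduction in each factor of $E$, but this is mild because the elements of $L$ are zero in precisely those coordinates where the modulus of $E$ is not inflated by a factor of $h$.

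First, I would write $e=(e_1,\ldots,e_m)$ and decompose each $e_i$ as $e_i=c_is_i+d_i$ with $0\le d_i<s_i$ and $0\le c_i<z_i(h-1)+1$; so $c_i=0$ automatically whenever $z_i=0$, and $0\le c_i<h$ when $z_i=1$. With this notation, Definition~\ref{defexph-ary} gives
\[
\phi'(e)\equiv \phi(d_1,\ldots,d_m)+\sum_{i=1}^m c_i \ \bmod h.
\]
Next, I would compute $(e+g)_i$ in each factor. For $z_i=0$ the definition of $L$ forces $g_i=0$, so $(e+g)_i=e_i$ and the decomposition is unchanged. For $z_i=1$ we have $g_i=y_is_i$ with $0\le y_i<h$, and the $i$-th factor of $E$ is $\Z_{hs_i}$. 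Writing $c_i+y_i=hq_i+r_i$ with $0\le r_i<h$, we get
\[
(e+g)_i\equiv (c_i+y_i)s_i+d_i\equiv r_is_i+d_i \ \bmod hs_i,
\]
so that $(e+g)_i\bmod s_i=d_i$ and $\lfloor (e+g)_i/s_i\rfloor=r_i\equiv c_i+y_i\bmod h$.

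Applying Definition~\ref{defexph-ary} to $e+g$ then yields
\[
\phi'(e+g)\equiv \phi(d_1,\ldots,d_m)+\sum_{i=1}^m (c_i+y_i)\ \bmod h,
\]
where, to keep the formula uniform, I set $y_i=0$ whenever $z_i=0$ (consistent with $g_i=0$ there). Subtracting the expression for $\phi'(e)$ gives $\phi'(e+g)-\phi'(e)\equiv \sum_i y_i=\sum_i g_i/s_i=b\bmod h$, which is the claim.

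The main obstacle is purely bookkeeping: one must be careful that the reduction modulo $hs_i$ in the inflated factors does not disturb the residue $d_i$ of $e_i$ modulo $s_i$, and that the carry from $c_i+y_i$ in $\Z_h$ matches the contribution $g_i/s_i$ to $b$. Since $g_i$ is an exact multiple of $s_i$ in the relevant coordinates and the modulus of $E$ in those coordinates is an exact multiple of $hs_i$, this matching is automatic, and no genuine difficulty arises.
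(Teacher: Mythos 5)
Your proof is correct; the paper dismisses this lemma as ``routine, from the definitions,'' and your coordinate-by-coordinate computation (decomposing $e_i = c_i s_i + d_i$ and checking that adding $y_i s_i$ modulo $h s_i$ leaves the residue $d_i$ untouched while shifting the quotient by $y_i$ modulo $h$) is precisely the routine verification the authors had in mind. Nothing to add.
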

\begin{proof}
This is routine, from the definitions.
\end{proof}

\vspace{2.5pt}

\begin{corollary}
\label{lemmaach}
$AC_{\phi'}(g)=
\zeta_h^{-b} |E|$ for any $g\in L$.
\end{corollary}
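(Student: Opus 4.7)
The plan is to unwind the definition of periodic autocorrelation and apply the preceding lemma termwise. By definition,
\[
AC_{\phi'}(g) = \sum_{e\in E} \zeta_h^{\phi'(e)-\phi'(e+g)},
\]
so the task reduces to computing the exponent $\phi'(e)-\phi'(e+g) \bmod h$ for each $e\in E$ when the shift $g$ lies in the distinguished subgroup $L$.

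First I would invoke Lemma~\ref{lem-phi} directly: for $g=(g_1,\ldots,g_m)\in L$, the lemma asserts $\phi'(e+g)\equiv \phi'(e)+b\,\bmod h$ with $b=\sum_i g_i/s_i$ (this is well-defined since $g\in L$ forces each $g_i$ to be a multiple of $s_i$). Hence $\phi'(e)-\phi'(e+g)\equiv -b\,\bmod h$ is independent of $e$, which means every term in the autocorrelation sum equals the constant $\zeta_h^{-b}$.

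Summing over $e\in E$ then yields $AC_{\phi'}(g)=|E|\,\zeta_h^{-b}$, which is the desired identity. There is no real obstacle: the only subtlety is checking that $b$ is an integer (guaranteed by the definition of $L$) and that the shift-by-$g$ map is a bijection of $E$ so the summation range is preserved, both of which are immediate.
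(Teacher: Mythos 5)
Your proof is correct and follows exactly the route the paper intends: the corollary is an immediate consequence of Lemma~\ref{lem-phi}, since each summand $\zeta_h^{\phi'(e)-\phi'(e+g)}$ equals the constant $\zeta_h^{-b}$, and summing over $E$ gives $\zeta_h^{-b}|E|$. The paper leaves this derivation implicit, so there is nothing further to compare.
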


\begin{definition} \label{dfn-GPhA}
{\em 
A \mbox{$h$-ary} $\bf s$-array $\phi$ 
with expansion $\phi'\colon E\rightarrow \Z_h$
of type ${\bf z}$ is {\em generalized perfect}
if  $AC_{\phi'}(g)=\allowbreak 0$ for all 
$g \in E\setminus L$;
in short, $\phi$ is a 
$\mathrm{GPhA}({\bf s})$ of type ${\bf z}$.
 We write $\mathrm{GPhA}(c^m)$ when
$\mathbf{s}=(c,\ldots, c)$
of length $m$ for a constant $c$.}
\end{definition}

\vspace{2.5pt}

So a GPhA$({\bf s})$ of type
${\bf 0}$ is exactly
a perfect $h$-ary  ${\bf s}$-array.

\vspace{2.5pt}

\begin{definition}[cf.{~\cite[Definition~2.2]{WZ07}}]
\label{dfgpbf}
{\em 
A map $f\colon {\mathbb{Z}}_{q}^m
\rightarrow \allowbreak 
{\mathbb{Z}}_{h}$
such that 
$|AC_f(x)|\in\{ 0, q^m\}$
for all $x\in \mathbb{Z}_{q}^m$
is a {\em generalized partially bent 
function} (GPBF).}
\end{definition}

\vspace{2.5pt}

Let $\phi$ be a $h$-ary $(q,\ldots, q)$-array.
By Corollary~\ref{lemmaach}, 
if $\phi$ is generalized perfect, then 
$\phi'$ is generalized partially bent.
However, the converse does not hold, as
evidenced by the following simple
example.
Define $\phi\colon  {\mathbb{Z}}_{2}^2 
 \rightarrow {\mathbb{Z}}_{2}$ 
 by $\phi(0,1)=1$
 and  $\phi(0,0)=\phi(1,0)=\phi(1,1)=0$. 
 The expansion of $\phi$ of type $\bf 1$
 is a GPBF, but $\phi$ is not a 
 $\mathrm{GP2A}(2^2)$ of type $\bf 1$
 (writing $\bf 1$ for the all $1$s vector).
 We obtain the converse by
 imposing more conditions.
 
 \vspace{2.5pt}

\begin{proposition} \label{pro-gpbf-gpha}
Let $\phi$ be an array 
 ${\mathbb{Z}}_{h}^m  \rightarrow 
{\mathbb{Z}}_{h}$
 such that for each
 $y=(y_1,\ldots , y_m)\in
 \allowbreak {\mathbb{Z}}_{h}^m
 \setminus \{\mathbf{0}\}$ 
 with $\sum_i y_i\equiv 0 \, \bmod h$, 
 there exists 
$x=(x_1,\ldots,x_m)\in {\mathbb{Z}}_{h}^m$ 
satisfying
 \begin{equation}
 \label{eq-phi-lineal}
\phi(x+y)+
\sum_i \left\lfloor (x_i+y_i)/h\right\rfloor 
\not \equiv \phi(x)+\phi(y) \, \bmod h. 
 \end{equation}
Then the expansion $\phi'$ of  $\phi$ 
 of type  $\mathbf{1}$ is a GPBF 
 if and only if $\phi$ is a $GPhA(h^m)$
 of type $\mathbf{1}$.
\end{proposition}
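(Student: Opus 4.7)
I would prove both directions of the biconditional separately. For the reverse direction --- that $\phi$ being a $\mathrm{GPhA}(h^m)$ of type $\mathbf{1}$ implies $\phi'$ is a GPBF --- Corollary~\ref{lemmaach} is enough: for $g\in L$ that corollary yields $|AC_{\phi'}(g)|=|E|=(h^2)^m$, while the $\mathrm{GPhA}$ condition gives $AC_{\phi'}(g)=0$ for $g\in E\setminus L$, so $|AC_{\phi'}(g)|\in\{0,(h^2)^m\}$ throughout $E$. This half uses nothing beyond the two definitions and the corollary; in particular, none of the hypothesis on $\phi$.

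For the forward direction, assume $\phi'$ is a GPBF and pick $g\in E\setminus L$. Decompose $g_i=y_ih+r_i$ with $0\le y_i,r_i<h$; then $r\ne\mathbf{0}$. Aiming at $AC_{\phi'}(g)=0$, suppose instead $|AC_{\phi'}(g)|=|E|$. Since $AC_{\phi'}(g)$ is a sum of $|E|$ complex $h$-th roots of unity attaining the triangle-inequality bound, all summands must agree, i.e.\ $e\mapsto\phi'(e+g)-\phi'(e)\pmod h$ is constant on $E$. Writing $e_i=u_ih+v_i$ and reducing $e+g$ coordinate-wise modulo $h^2$ with carry $\delta_i:=\lfloor(v_i+r_i)/h\rfloor$, the $u$- and $y$-dependent terms cancel, and evaluating the constant at $v=\mathbf{0}$ reduces the condition to
\begin{equation*}
\phi(v+r)+\sum_{i=1}^{m}\lfloor(v_i+r_i)/h\rfloor\equiv\phi(v)+\phi(r)\pmod{h}\qquad\forall\,v\in\mathbb{Z}_h^m,\tag{$\ast$}
\end{equation*}
where $v+r$ denotes the sum in $\mathbb{Z}_h^m$.

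I then split on $s:=\sum_i r_i\bmod h$. If $s\equiv 0$, then $(\ast)$ (applied with $y=r$) directly contradicts the hypothesis, which asserts the existence of some $x$ at which such a linearity fails. If $s\not\equiv 0$, I claim $(\ast)$ is already impossible for \emph{any} $\phi$: iterating $(\ast)$ inductively yields
\[
\phi(v+kr)\equiv\phi(v)+k\phi(r)-\sum_{j=0}^{k-1}c(v+jr,r)\pmod{h},\qquad c(x,y):=\sum_i\lfloor(x_i+y_i)/h\rfloor,
\]
and specialising to $k=h$ (using $hr=\mathbf{0}$ in $\mathbb{Z}_h^m$ and $h\phi(r)\equiv 0$) forces $\sum_{j=0}^{h-1}c(v+jr,r)\equiv 0\pmod h$ for every $v$. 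A one-variable telescoping identity, $\sum_{j=0}^{h-1}\lfloor((a+jb)\bmod h+b)/h\rfloor=b$ for $0\le a,b<h$, evaluates the double sum as $\sum_i r_i=s\not\equiv 0$, a contradiction. The main technical obstacle is the bookkeeping needed to derive $(\ast)$ --- tracking both a within-coordinate carry (from $v_i+r_i$) and a between-block carry (from $u_i+y_i+\delta_i$) and checking that the resulting expression is truly constant in $(u,y)$; everything else is either immediate or the short telescoping computation indicated above.
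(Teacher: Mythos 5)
Your proof is correct and follows essentially the same route as the paper's: both directions reduce $|AC_{\phi'}(g)|=|E|$ to constancy of $e\mapsto \phi'(e+g)-\phi'(e)$, both force $\sum_i r_i\equiv 0 \pmod h$ by summing around the $h$-term cycle generated by $g$ (your telescoping identity is the explicit form of the paper's step $\phi'(hg)\equiv 0$, obtained there via Lemma~\ref{lem-phi}), and both then invoke the hypothesis \eqref{eq-phi-lineal} with $y$ equal to the reduction of $g$ modulo $h$. The only organizational differences are that you carry out the reduction to $\mathbb{Z}_h^m$ by explicit carry bookkeeping where the paper works with $\phi'$ on $E$ and transfers from $a=g\bmod h$ to $g=a+l$ at the end using Lemma~\ref{lem-phi}, and that the easy direction, which you correctly dispatch with Corollary~\ref{lemmaach}, appears in the paper in the paragraph preceding the proposition rather than inside its proof.
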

\begin{proof}
In this proposition, $E = \Z_{h^2}^m$ and 
$L = \{ 0, h, \ldots , (h-1)h\}^m
\cong \Z_h^m$. 
Suppose that $\phi'$ is a GPBF. 
Then $\phi$ is a $\mathrm{GPhA}(h^m)$
if $|AC_{\phi'}(g)|<h^{2m}$ 
for all  $g\in E\setminus L$.
So we prove that
$\phi'(w)-\phi'(w+g)
\not \equiv \phi'(x)-\allowbreak 
\phi'(x+g)\, \bmod h$
for some $w, x\in E$.
Taking $w=0$, and assuming that 
$\phi$ is normalized, this non-congruence
becomes
$\phi'(x+g)\not \equiv 
 \phi'(x)+\allowbreak \phi'(g)$.

Suppose that  
\[
\phi'(0)-\phi'(g), \, 
  \phi'(g)-\phi'(2g) , \, 
 \ldots , \, 
 \phi'((h-1)g) -\phi'(h g)     
\]
are all congruent modulo $h$  
 (otherwise, the required $x$
 may be found as a multiple of $g$).
Adding these $h$ terms gives
\[
\phi'(0)-\phi'(hg)
\equiv 0 \, \bmod h 
\ \Rightarrow \ \phi'(hg) 
\equiv 0 \, \bmod h.
\]
Consequently
$\sum_i g_i\equiv  0\, \bmod h$.

If $g=(g_1,\ldots, g_m)$ with
$0\leq g_i< h$, then the 
right-hand side of
  \eqref{eq-phi-lineal} for $y=g$
is $\phi'(x)+\allowbreak \phi'(g)$, 
and the left-hand side 
is $\phi'(x+g)$; so we are done.

Now let $g=a+l$ with  
$a = (g_1 \, \bmod h, \ldots,  
g_m  \, \bmod h)$ and $l\in L$. 
Then $\sum_i a_i\equiv  0$,
because $hg = ha$ in $E$.
Using Lemma~\ref{lem-phi},
and adding $b=\sum_i l_i/h$ 
to both sides of 
\eqref{eq-phi-lineal} for $y=a$,
we see that 
$\phi'(x+g)\not \equiv 
\phi'(x) + \phi'(g)$. 
This completes the proof.
\end{proof}

\section{Equivalences between arrays, 
bent functions, and 
associated combinatorial objects}
\label{sec:main}

Let $\bf s$, $\bf z$, $G$, $K$, $L$, $E$ 
be as in Definition \ref{defexph-ary} 
and its environs, with 
${\bf z}\neq {\bf 0}$. 
We have a short exact sequence
\begin{equation}
\label{sejed}
1 \longrightarrow \langle \zeta_h\rangle 
\stackrel{\iota}{\longrightarrow}
E/{K} \stackrel{\beta}{\longrightarrow} 
G \longrightarrow 0 ,
\end{equation}
where $\beta(g+K)\equiv g \, \bmod  {\bf{s}}$
and $\iota$ sends $\zeta_h$ to a
generator of $L/K\cong \mathbb{Z}_h$.
In the standard way we extract a
cocycle $\mu_{\bf z}\in 
Z^2(G,\langle \zeta_h\rangle)$,
from \eqref{sejed}, 
depending on the choice of a transversal 
map $\tau \colon\allowbreak  
G\rightarrow \allowbreak E/K$.  
Set $\tau(x) = x+K$ (a mild
abuse of notation), so  that
$\beta\circ \tau = \mathrm{id}_G$; 
then $\mu_{\bf z}(x, y)= 
\iota^{-1}(\tau(x)+ \tau(y) - 
\allowbreak \tau(x+y))$.

\vspace{2.5pt}

\begin{proposition}[cf.~{\cite[Lemma~3.1]{Hug00}}]
\label{pro-fz}
Define $\gamma_t\in 
Z^2(\mathbb{Z}_t,\langle\zeta_h\rangle)$
by $\gamma_t(j,k)= 
\zeta_h^{\lfloor (j+k)/t\rfloor}$.
Then
\begin{itemize}
\item[{\em (i)}]
$\mu_{\bf z}(x,y)=\prod_{i \,
{\tiny \mbox{with}} \, z_i=1}
\gamma_{s_i}(x_i,y_i);$
\item[{\em (ii)}]
$\mu_{\bf z}\in 
B^2(G,\langle \zeta_h\rangle)$ if and only
if $s_{i}$ is coprime to $h$
whenever $z_{i} =1$.
\end{itemize}
\end{proposition}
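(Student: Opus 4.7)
My plan is to establish (i) by unpacking the defining formula for $\mu_{\mathbf{z}}$ through the canonical transversal $\tau$, and to derive (ii) from the classification of cyclic central extensions combined with a restriction argument. For (i), I pick $\tau$ by taking, for $x=(x_1,\ldots,x_m)\in G$ with $0\le x_i<s_i$, the representative $(x_1,\ldots,x_m)$ of $x$ in $E$. Then $\tau(x)+\tau(y)$ is represented in $E$ by the componentwise sums $x_i+y_i$, whereas $\tau(x+y)$ is represented by the coordinatewise reductions $(x_i+y_i)\bmod s_i$. The $i$-th component of $\tau(x)+\tau(y)-\tau(x+y)$ is therefore $\lfloor (x_i+y_i)/s_i\rfloor\, s_i$ inside $E$'s $i$-th factor: when $z_i=0$ this lies in $\mathbb{Z}_{s_i}$ and is $0$, while when $z_i=1$ it lies in $s_i\mathbb{Z}_{hs_i}\subseteq L$. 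Hence $\tau(x)+\tau(y)-\tau(x+y)\in L/K$. Fixing $\iota(\zeta_h)$ to be the class of $(0,\ldots,s_j,\ldots,0)$ for any $j$ with $z_j=1$, and identifying $L/K\cong\mathbb{Z}_h$ via $(g_1,\ldots,g_m)+K\mapsto \sum_i g_i/s_i\bmod h$, I then obtain $\mu_{\mathbf{z}}(x,y)=\zeta_h^{\sum_{i:\, z_i=1}\lfloor(x_i+y_i)/s_i\rfloor}=\prod_{i:\, z_i=1}\gamma_{s_i}(x_i,y_i)$, which is (i).

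For the \emph{if} direction of (ii), suppose $\gcd(s_i,h)=1$ whenever $z_i=1$. Choose $t_i\in\mathbb{Z}$ with $t_is_i\equiv 1\bmod h$ and set $\phi_i(j)=\zeta_h^{-t_ij}$ on $\{0,\ldots,s_i-1\}$; a direct computation using $t_is_i\equiv 1\bmod h$ gives $\partial\phi_i=\gamma_{s_i}$. Then $\Phi(x_1,\ldots,x_m):=\prod_{i:\, z_i=1}\phi_i(x_i)$ satisfies $\partial\Phi=\mu_{\mathbf{z}}$ by part (i), so $\mu_{\mathbf{z}}\in B^2(G,\langle\zeta_h\rangle)$.

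For the \emph{only if} direction, I use that restriction of cocycles to a subgroup carries coboundaries to coboundaries. If $z_i=1$, restricting $\mu_{\mathbf{z}}$ to the $i$-th factor $\mathbb{Z}_{s_i}\hookrightarrow G$ yields $\gamma_{s_i}$ by (i), since $\gamma_{s_j}(0,0)=1$ for $j\ne i$. But $\gamma_{s_i}$ is the cocycle of the central extension $1\to\mathbb{Z}_h\to\mathbb{Z}_{hs_i}\to\mathbb{Z}_{s_i}\to 1$, which has trivial cohomology class---equivalently, is a coboundary---exactly when this extension is split, i.e.\ exactly when $\gcd(s_i,h)=1$. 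Hence $\mu_{\mathbf{z}}$ being a coboundary forces $\gcd(s_i,h)=1$ for every $i$ with $z_i=1$. The main obstacle I anticipate is the coordinate-by-coordinate bookkeeping in (i): one must consistently distinguish reductions modulo $s_i$ (coming from $\tau$ and from $G$-addition) from reductions modulo $(z_i(h-1)+1)s_i$ (coming from $E$-addition), and then identify the resulting element of $L/K$ with an integer modulo $h$. Once (i) is settled, both directions of (ii) are essentially mechanical, reducing the classification question to the split/non-split dichotomy for cyclic central extensions.
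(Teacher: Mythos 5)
Your proposal is correct. The paper itself supplies no proof of Proposition~\ref{pro-fz}, merely citing the binary-case antecedent \cite[Lemma~3.1]{Hug00}, so there is no in-text argument to compare against; what you have written is the natural $h$-ary generalization of Hughes's computation and it checks out. For (i), the key points are all in order: with $\tau(x)=x+K$ the element $\tau(x)+\tau(y)-\tau(x+y)$ has $i$-th coordinate $\lfloor(x_i+y_i)/s_i\rfloor s_i$, which lies in the $i$-th factor of $L$ precisely because $0\le x_i,y_i<s_i$ forces $\lfloor(x_i+y_i)/s_i\rfloor\in\{0,1\}\subseteq\{0,\ldots,h-1\}$, and the identification $L/K\cong\Z_h$ via $g+K\mapsto\sum_i g_i/s_i$ is well defined since $K$ is exactly the kernel of that map; note also that the generators $(0,\ldots,s_i,\ldots,0)+K$ for the various $i$ with $z_i=1$ all coincide in $L/K$, so your choice of $\iota$ is canonical. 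For (ii), the explicit splitting map $\phi_i(j)=\zeta_h^{-t_ij}$ with $t_is_i\equiv 1\bmod h$ does give $\partial\phi_i=\gamma_{s_i}$, and the converse via restriction to the $i$-th cyclic factor, together with the observation that $\gamma_{s_i}$ presents the extension $\Z_{hs_i}$ of $\Z_h$ by $\Z_{s_i}$ (which splits exactly when $\gcd(s_i,h)=1$, since $\Z_{hs_i}\cong\Z_h\times\Z_{s_i}$ only then), is a clean and complete argument; equivalently one could quote $H^2(\Z_{s_i},\Z_h)\cong\Z_{\gcd(s_i,h)}$ with $\gamma_{s_i}$ a generator.
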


\vspace{2.5pt}

In the opposite direction,
each cocycle $\psi\in 
Z^2(G, \langle \zeta_h\rangle)$  
determines a central extension 
$E_{\psi}$ of $\langle \zeta_h\rangle$
by $G$: namely, the group with elements
$\{ (\zeta_h^j,g) \mid 
0\leq j< h,\,g\in G\}$
and multiplication defined by 
$(u,g)(v,h)=(uv\hspace{.5pt} \psi(g,h),gh)$.
More properly, the central extension is
the short exact sequence
\begin{equation}\label{StdSES}
1\longrightarrow \langle \zeta_h\rangle 
\stackrel{\iota'}{\longrightarrow} E_{\psi} 
\stackrel{\beta'}{\longrightarrow} G 
\longrightarrow 0,
\end{equation}
where $\iota'(u)=(u,0)$ and 
$\beta'(u,x)=x$.

The next two results mimic
Proposition~4 and
Lemma~3 of \cite{GOBA}, 
respectively.

\vspace{3.5pt}

\begin{proposition}
\label{GammaRef}
If $\mu_{\bf z}$ and 
$\psi\in Z^2(G,\langle \zeta_h\rangle)$
are in the same cohomology class, say 
$\psi=\mu_{\bf z}\partial\phi$, then 
{\em (\ref{sejed})} and 
 {\em (\ref{StdSES})} 
are equivalent as short exact
sequences. Specifically, for the 
transversal map $\tau$
as defined before
Proposition{\em ~\ref{pro-fz}},
the map $\Gamma$ sending
$(u,x)\in E_\psi$ to  
$\iota(u\phi(x)^{-1})+\tau(x)
\in E/K$ is an isomorphism 
that makes the diagram
\[
\begin{array}{ccccccccc}
1 & \longrightarrow & \langle \zeta_h\rangle 
& \stackrel{\iota'}{\longrightarrow} & 
E_{\psi} & \stackrel{\beta'}{\longrightarrow} 
& G & \longrightarrow & 0\\[2mm]
& & \| & & 
{\scriptsize \mbox{$\Gamma$}} 
\big\downarrow\phantom{\Gamma} & & \|
 & &  \\[2mm]
1 & \longrightarrow & 
\langle \zeta_h\rangle 
& \stackrel{\iota}{\longrightarrow} & E/K 
& \stackrel{\beta}{\longrightarrow} & G & 
\longrightarrow & 0
\end{array}
\]
commute.
\end{proposition}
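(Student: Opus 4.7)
The plan is to verify the two squares commute, check that $\Gamma$ is a group homomorphism, and then invoke the short five lemma to conclude that $\Gamma$ is an isomorphism. Throughout I will use that $\phi$ is normalized, so $\phi(0)=1$, and that $\tau(0)=K$ (i.e.\ the identity of $E/K$).

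First I would check the two squares. For the left square, $\Gamma(\iota'(u))=\Gamma(u,0)=\iota(u\phi(0)^{-1})+\tau(0)=\iota(u)$, giving $\Gamma\circ \iota'=\iota$. For the right square, $\iota(u\phi(x)^{-1})$ lies in $L/K=\ker\beta$, so $\beta(\Gamma(u,x))=\beta(\tau(x))=x=\beta'(u,x)$. These checks are routine and I do not expect them to be the obstacle.

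The heart of the argument is showing $\Gamma$ is a homomorphism. Expanding both sides:
\begin{align*}
\Gamma((u,x)(v,y)) &= \Gamma(uv\,\psi(x,y),\, x+y)
 = \iota\!\left(uv\,\psi(x,y)\,\phi(x+y)^{-1}\right)+\tau(x+y),\\[2pt]
\Gamma(u,x)+\Gamma(v,y) &= \iota\!\left(uv\,\phi(x)^{-1}\phi(y)^{-1}\right)+\tau(x)+\tau(y).
\end{align*}
By definition of $\mu_{\bf z}$ via the transversal $\tau$, the difference $\tau(x)+\tau(y)-\tau(x+y)$ equals $\iota(\mu_{\bf z}(x,y))$. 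Subtracting the two displayed expressions and using the injectivity of $\iota$, equality of the two sides is equivalent to
\[
\psi(x,y) = \mu_{\bf z}(x,y)\,\phi(x)^{-1}\phi(y)^{-1}\phi(x+y),
\]
which is precisely the hypothesis $\psi=\mu_{\bf z}\,\partial\phi$. This step is the main (and only really substantive) obstacle, and it is handled by matching the coboundary formula $\partial\phi(x,y)=\phi(x)^{-1}\phi(y)^{-1}\phi(x+y)$ against the defining property of $\mu_{\bf z}$ arising from the transversal $\tau$.

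Finally, bijectivity of $\Gamma$ follows from the short five lemma applied to the commutative diagram, since the outer vertical maps on $\langle\zeta_h\rangle$ and $G$ are identities. Alternatively, one can argue directly: if $\Gamma(u,x)=K$, then $x=\beta(\Gamma(u,x))=0$, whence $\iota(u)=K$ and so $u=1$; injectivity combined with $|E_\psi|=h|G|=|E/K|$ yields the conclusion. This establishes the equivalence of the two short exact sequences.
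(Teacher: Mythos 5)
Your proof is correct. The paper in fact gives no proof of Proposition~\ref{GammaRef} at all---it only remarks that the result mimics Proposition~4 of \cite{GOBA}---and your argument (checking commutativity of the two squares, reducing the homomorphism property of $\Gamma$ to the identity $\psi=\mu_{\bf z}\partial\phi$ via $\tau(x)+\tau(y)-\tau(x+y)=\iota(\mu_{\bf z}(x,y))$, and concluding bijectivity by the short five lemma or the counting $|E_\psi|=h|G|=|E/K|$) is exactly the standard one that the authors are implicitly invoking.
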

\begin{remark}
\label{CYAMultorAdd}
{\em 
In Proposition~\ref{GammaRef}, the 
 $\phi$ has multiplicative
target group $\langle \zeta_h\rangle$.
When considering $\phi$ as
an array, we may replace the multiplicative
group $\langle \zeta_h\rangle$
by the additive group $\Z_h$, without
bothering to change notation.
Likewise, note that $E_\psi$ is treated 
multiplicatively,
whereas $E$ and its subgroups
and quotients are treated additively.}
\end{remark}
\begin{lemma}
\label{GammaTransmit}
Assuming the set-up of Proposition{\em ~\ref{GammaRef}}, $\Gamma$ maps 
$\{(1,x)\ | \ x\in G\}\subseteq E_{\psi}$ 
onto $\{g+K\in E/K \mid \phi'(g)
\equiv 0 \, \bmod h \}$.
\end{lemma}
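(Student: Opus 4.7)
The plan is to choose convenient representatives and reduce both directions to a single congruence. Fix an index $i_0$ with $z_{i_0}=1$, and let $e\in E$ be the element whose $i_0$-th coordinate is $s_{i_0}$ and whose other coordinates are $0$; then $e+K$ generates $L/K$, so $\iota(\zeta_h)=e+K$. I take the transversal $\tau(x)=x+K$ with each $x=(x_1,\ldots,x_m)\in G$ lifted to $E$ via the ``box'' representative $0\le x_i<s_i$. Identifying $\phi\colon G\to \langle \zeta_h\rangle$ with an additive map to $\Z_h$ as in Remark~\ref{CYAMultorAdd}, the formula from Proposition~\ref{GammaRef} becomes
\[
\Gamma(1,x) \; = \; -\phi(x)\,e + x + K.
\]

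First I would check that $\phi'(g) \, \bmod h$ depends only on the coset $g+K$. For $k\in K\subseteq L$ with $i$-th coordinate $y_i s_i$ and $\sum_i y_i\equiv 0 \, \bmod h$, Lemma~\ref{lem-phi} gives $\phi'(g+k)\equiv \phi'(g) \, \bmod h$. Next I would verify that the image of $\{(1,x)\mid x\in G\}$ lies in the target set, by direct evaluation of $\phi'(\Gamma(1,x))$ from Definition~\ref{defexph-ary}: the reduction modulo $\mathbf{s}$ of $-\phi(x)\,e + x$ is $x$ itself, and the floor-sum $b$ picks up the single contribution $-\phi(x) \, \bmod h$ from the $i_0$-th coordinate, giving $\phi'(\Gamma(1,x))\equiv \phi(x) - \phi(x) \equiv 0 \, \bmod h$.

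For surjectivity, given $g+K$ with $\phi'(g)\equiv 0 \, \bmod h$, I set $a = g \, \bmod \mathbf{s}\in G$ (box representative) and $b_i = \lfloor g_i/s_i\rfloor$, so that $g-a\in L$ has $i$-th scaled coefficient $b_i$ and $\phi'(g)\equiv \phi(a)+\sum_i b_i \, \bmod h$. The equality $\Gamma(1,a) = g+K$ is then equivalent to $g - a + \phi(a)\,e \in K$, and unpacking the definition of $K$ this becomes $\sum_i b_i + \phi(a) \equiv 0 \, \bmod h$, which is exactly the hypothesis.

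The main obstacle will be accurate bookkeeping between multiplicative $\langle \zeta_h\rangle$ and additive $\Z_h$, and between the generator $\iota(\zeta_h)$ and the floor-sums appearing in $\phi'$; once the transversal and the index $i_0$ are fixed, both directions collapse to the single identity $\sum_i \lfloor g_i/s_i\rfloor + \phi(a)\equiv 0 \, \bmod h$. (Injectivity of $\Gamma$ on $\{(1,x)\}$ is not asserted in the statement, but would come for free from commutativity of the diagram in Proposition~\ref{GammaRef}, since $\beta\circ\Gamma(1,x)=\beta'(1,x)=x$.)
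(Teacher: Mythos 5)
Your proposal is correct and follows essentially the same route as the paper's own proof: the forward inclusion is the same computation of $\phi'(-\phi(x)e+x)\equiv \phi(x)-\phi(x)\equiv 0 \bmod h$ via Lemma~\ref{lem-phi}, and the converse uses the same representative $a\equiv g\bmod \mathbf{s}$ together with the observation that $g-a+\phi(a)e\in K$ precisely because $\sum_i\lfloor g_i/s_i\rfloor+\phi(a)\equiv\phi'(g)\equiv 0\bmod h$. The only cosmetic difference is that you fix the generator $e$ of $L/K$ at the outset rather than invoking it mid-proof.
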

\begin{proof}
As $\phi'$ is constant on each coset of 
$K$ in $E$ by Lemma~\ref{lem-phi}, the 
stated subset of $E/K$ is well-defined.
If $\phi(x) = \zeta_h^j$
then $\Gamma((1,x))= -jy + x+K$ where
$\iota(\zeta_h) = y+K$ generates
$L/K$. Remember that $y$ may be 
chosen as $(0,\ldots,0,s_{i},0,\ldots,0)$ 
for some $i$.
Again by Lemma~\ref{lem-phi},
$\phi'(-j y + x)= j -
j \sum_i(y_i/s_i)
\equiv 0  \, \bmod h$. 
Conversely, suppose that 
$\phi'(g)=0$. 
Put $a\equiv g \, \allowbreak 
\bmod {\bf s}$ and 
 $b\equiv \sum_i\lfloor g_i/s_i\rfloor
\, \bmod h$;
so $\phi(a)=\phi'(g)-b\equiv -b
\allowbreak \, \bmod h$. 
Therefore, because
$g-\allowbreak 
a-(0,\ldots, 0, bs_i , 0, \ldots, 0)
\in K$, we get that
$g+K= \iota(\phi(a)^{-1})+a+K
=\allowbreak \Gamma((1,a))$.
\end{proof} 
\begin{remark}
{\em $\{(1,x)\ | \ x\in G\}$ is a 
full transversal for the cosets of 
$\langle \zeta_h\rangle$ in
$E_{\psi}$.}
\end{remark}

\vspace{7pt}

Next we present a couple of lemmas 
about special subsets of $E$, to be used 
in the proof of the impending theorem.
For $0\leq i\leq h-1$, 
define $N_{\phi'}^i=\{g\in E \ |\  
\phi'(g)\equiv i \, \bmod h\}$ and 
$L_i=\{g\in L \ | \allowbreak 
\sum_k (g_k/s_k) 
\equiv i \, \bmod h\}$.  
\begin{lemma}\label{lemmaireduce0}
$N_{\phi'}^{i}+L_{j}=
N_{\phi'}^{i+j}$
(elementwise sum in $E$),
reading indices modulo $h$.
\end{lemma}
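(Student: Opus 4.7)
The plan is to establish the two set inclusions separately, using Lemma~\ref{lem-phi} as the essential tool. The statement is really just the observation that adding an element of $L$ shifts $\phi'$ by a predictable amount modulo $h$, so the level sets $N_{\phi'}^i$ permute among themselves under translation by $L_j$.

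For the forward inclusion $N_{\phi'}^i + L_j \subseteq N_{\phi'}^{i+j}$, I would take $e \in N_{\phi'}^i$ and $\ell = (\ell_1, \ldots, \ell_m) \in L_j$. Since $\ell \in L$, Lemma~\ref{lem-phi} applies and gives
\[
\phi'(e+\ell) \equiv \phi'(e) + \sum_{k} \ell_k/s_k \equiv i + j \pmod{h},
\]
which is exactly the statement that $e + \ell \in N_{\phi'}^{i+j}$.

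For the reverse inclusion $N_{\phi'}^{i+j} \subseteq N_{\phi'}^i + L_j$, I would first note that $L_j$ is non-empty for every $j$: because ${\bf z} \neq {\bf 0}$, one can pick an index $i_0$ with $z_{i_0} = 1$ and take the element whose $i_0$-th coordinate is $j s_{i_0}$ and whose other coordinates are $0$; this lies in $L$ and has coordinate-sum $j$ modulo $h$. Given $f \in N_{\phi'}^{i+j}$, fix any $\ell \in L_j$ and set $e := f - \ell \in E$. Applying Lemma~\ref{lem-phi} once more,
\[
\phi'(f) = \phi'(e + \ell) \equiv \phi'(e) + j \pmod{h},
\]
so $\phi'(e) \equiv (i+j) - j \equiv i \pmod{h}$, which shows $e \in N_{\phi'}^i$ and hence $f = e + \ell \in N_{\phi'}^i + L_j$.

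There is essentially no obstacle here; the only point requiring a moment's care is verifying that $L_j$ is non-empty for every $j$ (otherwise the reverse inclusion is vacuous on the wrong side), and this follows immediately from the assumption ${\bf z} \neq {\bf 0}$. Everything else is a direct bookkeeping of the shift formula provided by Lemma~\ref{lem-phi}.
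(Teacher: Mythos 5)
Your proof is correct and follows essentially the same route as the paper: the forward inclusion is the direct application of Lemma~\ref{lem-phi}, and the reverse inclusion is obtained by translating back by an element of $L_j$ (the paper phrases this via $-L_j = L_{h-j}$ and reuses the forward inclusion, but it is the same computation). Your explicit check that $L_j\neq\emptyset$ when ${\bf z}\neq{\bf 0}$ is a small point the paper leaves implicit, and it is indeed needed for the reverse inclusion.
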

\begin{proof}
If $x\in N_{\phi'}^i$ and $g\in L_j$,
  then $\phi'(x+g)\equiv
  \phi'(x) +\sum_k (g_k/s_k)
  \equiv i+j$
  by Lemma~\ref{lem-phi}. Hence
 $N_{\phi'}^{i}+L_j\subseteq 
 N_{\phi'}^{i+j}$.
 Since $-L_j = L_{h-j}$,
 this containment implies that
 $N_{\phi'}^{i+j}-L_j\subseteq 
 N_{\phi'}^{i}$, and so
 $N_{\phi'}^{i+j} = 
  N_{\phi'}^{i}+L_j$.
 \end{proof}
 
\begin{lemma}\label{ndslemma}
For all $i,j$ and $e\in E$,
$|N_{\phi'}^i\cap(e+N_{\phi'}^{i})|=
|N_{\phi'}^j\cap (e+N_{\phi'}^{j})|$.
\end{lemma}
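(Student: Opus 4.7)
My plan is to show that the level sets $N_{\phi'}^i$ and $N_{\phi'}^j$ are translates of one another inside $E$; translation then transports $N_{\phi'}^i\cap(e+N_{\phi'}^i)$ to $N_{\phi'}^j\cap(e+N_{\phi'}^j)$, forcing the two cardinalities to coincide. The underlying reason is that $\phi'$ behaves affinely along the direction $L$, by Lemma~\ref{lem-phi}.

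Concretely, I would fix indices $i,j\in\{0,\ldots,h-1\}$ and exploit that, since ${\bf z}\neq {\bf 0}$, we have $L/K\cong \Z_h$, so every coset $L_k$ is non-empty; for instance $(j-i)y\in L_{j-i}$ where $y=(0,\ldots,0,s_{i_0},0,\ldots,0)$ for any $i_0$ with $z_{i_0}=1$. Pick any $g\in L_{j-i}$. By Lemma~\ref{lem-phi}, $\phi'(x+g)\equiv \phi'(x)+(j-i)\bmod h$ for every $x\in E$, so translation by $g$ restricts to a bijection $N_{\phi'}^i\to N_{\phi'}^j$, with inverse given by translation by $-g\in L_{i-j}$. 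Equivalently, $N_{\phi'}^j=N_{\phi'}^i+g$ as subsets of $E$.

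Translation commutes with intersection in the abelian group $E$, which yields
\[
N_{\phi'}^j\cap(e+N_{\phi'}^j)=(N_{\phi'}^i+g)\cap(e+N_{\phi'}^i+g)=\bigl(N_{\phi'}^i\cap(e+N_{\phi'}^i)\bigr)+g.
\]
Because translation by $g$ is a bijection on $E$, the two intersections have equal cardinality, proving the claim. I do not anticipate any real obstacle here: once Lemma~\ref{lem-phi} supplies the linear behaviour of $\phi'$ along $L$, the level sets $N_{\phi'}^i$ are automatically pairwise translates of one another, and the lemma drops out.
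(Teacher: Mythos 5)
Your proof is correct and follows essentially the same route as the paper: both hinge on picking $g\in L_{j-i}$ and using Lemma~\ref{lem-phi} (equivalently Lemma~\ref{lemmaireduce0}) to translate $N_{\phi'}^i$ onto $N_{\phi'}^j$. The only cosmetic difference is that the paper phrases this as a count of solutions to $x-y=e$, obtaining an inequality and then swapping $i$ and $j$, whereas you observe directly that translation by $g$ is a bijection carrying one intersection onto the other.
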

\begin{proof}
The equation $x-y=\allowbreak e$ 
has precisely 
$|N_{\phi'}^i\cap(e+N_{\phi'}^{i})|$
solutions
$(x,y)\in N_{\phi'}^i\times N_{\phi'}^i$.
By Lemma~\ref{lemmaireduce0}, 
for $g\in L_{j-i}$ each such 
$(x,y)$ gives a solution 
$(\tilde{x}, \tilde{y}) = 
(x+g,y+g)\in 
N_{\phi'}^j\times N_{\phi'}^j$ of the 
equation $\tilde{x}-\tilde{y}=\allowbreak 
e$. 
Thus $|N_{\phi'}^i\cap(e+N_{\phi'}^{i})|
\leq |N_{\phi'}^j\cap(e+N_{\phi'}^{j})|$.
Swapping $i$ and $j$ gives the equality.
\end{proof}

\vspace{12pt}

We also need a fact about
vanishing
sums of roots of unity (see,
e.g., \cite[Lemma 2.8.5]{ADT}).

\vspace{3pt}

\begin{lemma}\label{lemmahprimevanish}
For prime $h$, if 
$\sum_{i=0}^{h-1} \alpha_i\zeta_h^i =0$ with 
$\alpha_i\in \Z$, then 
$\alpha_0=\alpha_1=\cdots=\alpha_{h-1}$.
\end{lemma}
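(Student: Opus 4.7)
My plan is to invoke the irreducibility of the cyclotomic polynomial $\Phi_h(x) = 1 + x + x^2 + \cdots + x^{h-1}$ over $\mathbb{Q}$, which holds precisely because $h$ is prime (a quick proof is Eisenstein's criterion applied to $\Phi_h(x+1)$). Since $\zeta_h$ satisfies $\Phi_h(\zeta_h)=0$ and $\Phi_h$ is irreducible of degree $h-1$, it is the minimal polynomial of $\zeta_h$ over $\mathbb{Q}$.

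Given the hypothesis, form the polynomial
\[
P(x) = \sum_{i=0}^{h-1} \alpha_i x^i \in \mathbb{Z}[x],
\]
which has degree at most $h-1$ and vanishes at $\zeta_h$. By minimality of $\Phi_h$, we must have $\Phi_h(x) \mid P(x)$ in $\mathbb{Q}[x]$. The degree constraint $\deg P \le h-1 = \deg \Phi_h$ then forces $P(x) = c\, \Phi_h(x)$ for some $c \in \mathbb{Q}$; comparing any coefficient (e.g.\ the top one) shows $c = \alpha_{h-1} \in \mathbb{Z}$, and reading off the remaining coefficients yields $\alpha_0 = \alpha_1 = \cdots = \alpha_{h-1} = c$.

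There is no real obstacle here beyond citing (or briefly justifying) the irreducibility of $\Phi_h$ for prime $h$; the rest is immediate from the definition of the minimal polynomial and a degree count. Since the paper already references \cite[Lemma~2.8.5]{ADT} for this fact, I would keep the proof to just a few lines, either quoting the reference or giving the argument above.
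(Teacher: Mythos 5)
Your argument is correct and is the standard proof of this fact: irreducibility of $\Phi_h(x)=1+x+\cdots+x^{h-1}$ for prime $h$ (via Eisenstein at $x\mapsto x+1$) makes it the minimal polynomial of $\zeta_h$, so the degree-$\le h-1$ polynomial $P$ vanishing at $\zeta_h$ must be a scalar multiple of $\Phi_h$, forcing equal coefficients. The paper itself supplies no proof, simply quoting the result from \cite[Lemma~2.8.5]{ADT}, and your few-line justification is a faithful rendering of what that reference establishes; there is no gap.
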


\begin{theorem}
\label{th-gpha-rds}
Let $\phi$ be a $h$-ary ${\bf s}$-array of 
type ${\bf z}\neq {\bf 0}$, where 
$h$ is a prime dividing $v:=|G|=\prod_i s_i$
(Definition{\em ~\ref{defexph-ary}}), 
and let 
\[
R=\{g+K\in E/K \mid  \phi'(g)\equiv 0
\, \bmod h\}.
\]
Then $\phi$ is a $GPhA({\bf s})$ of type 
${\bf z}$ if and only if $R$ is a 
$(v,h,v,v/h)$-relative difference 
set in $E/K$ 
with forbidden subgroup $L/K$.
\end{theorem}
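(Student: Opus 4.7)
The plan is to verify the three defining properties of the purported RDS: $|R|=v$; $|R \cap (\bar{e}+R)|=0$ for $\bar{e}\in L/K\setminus\{0\}$; and $|R \cap (\bar{e}+R)|=v/h$ for $\bar{e}\in (E/K)\setminus (L/K)$. Lemma~\ref{lem-phi} makes the first two essentially immediate: within each $L$-coset of $E$ the function $\phi'$ attains each residue modulo $h$ on a unique sub-coset of $K$, so $|R|=v$ and $R$ is a transversal for $L/K$ in $E/K$. Being a transversal forces $R \cap (\bar{y}+R)=\emptyset$ whenever $\bar{y}\in L/K\setminus\{0\}$.

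For the third condition, I would introduce, for each $e\in E$ and $m\in\Z_h$,
\[
\sigma_m(e)=|\{F\in E/L : \phi'(g)-\phi'(g+e)\equiv m \, \bmod h \text{ for any } g\in F\}|,
\]
noting that the inner condition is constant along the $L$-coset $F$ by Lemma~\ref{lem-phi}, and that $\sum_m\sigma_m(e)=v$. Organising the sums over $E$ by $L$-cosets then yields the two identities
\[
AC_{\phi'}(e)=h^n\sum_{m=0}^{h-1}\zeta_h^m\sigma_m(e), \qquad |R \cap ((e+K)+R)|=\sigma_0(-e),
\]
where $n=\mathrm{wt}(\mathbf{z})$. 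The forward direction is then immediate: if $\phi$ is a $\mathrm{GPhA}(\mathbf{s})$ of type $\mathbf{z}$, then $AC_{\phi'}(e)=0$ for every $e\notin L$, and since $h$ is prime Lemma~\ref{lemmahprimevanish} forces $\sigma_0(e)=\cdots=\sigma_{h-1}(e)=v/h$; evaluating at $-e$ gives the required $|R\cap((e+K)+R)|=v/h$.

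The main obstacle is the converse, where the RDS condition yields only $\sigma_0(e)=v/h$ for $e\notin L$, yet we must promote this to all of the $\sigma_m(e)$ in order to kill $AC_{\phi'}(e)$. The key tool is a translation identity: for $y\in L_j$, Lemma~\ref{lem-phi} gives $\phi'(g+e+y)\equiv\phi'(g+e)+j$, whence $\sigma_m(e+y)=\sigma_{m+j}(e)$. Because $\mathbf{z}\neq\mathbf{0}$, each $L_j$ is non-empty; given $e\notin L$ and $m\in\Z_h$, I would choose $y\in L_{-m}$. Since $e-y\notin L$, the hypothesis gives $\sigma_0(e-y)=v/h$, and then the identity specialises to $\sigma_m(e)=\sigma_0(e-y)=v/h$. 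So every $\sigma_m(e)$ equals $v/h$, whence $AC_{\phi'}(e)=h^{n-1}v\sum_m\zeta_h^m=0$, completing the proof.
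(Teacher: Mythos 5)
Your proof is correct and follows essentially the same route as the paper's: your $\sigma_m(e)$ is the paper's $B_m^{(e)}$ divided by $|L|=h^{n}$, both arguments hinge on Lemma~\ref{lemmahprimevanish} for prime $h$, and your translation identity $\sigma_m(e+y)=\sigma_{m+j}(e)$ for $y\in L_j$ is exactly the paper's step $B_0^{(e)}=B_k^{(e-z)}$ obtained via Lemma~\ref{lemmaireduce0}. The only cosmetic difference is that by counting $L$-cosets and observing that $R$ selects the unique $K$-coset in each $L$-coset on which $\phi'$ vanishes, you obtain $|R|=v$ and $|R\cap(\bar{e}+R)|=\sigma_0(-e)$ directly, where the paper routes through Lemma~\ref{GammaTransmit} and the level sets $N_{\phi'}^i$ together with Lemma~\ref{ndslemma}.
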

\begin{proof}
For $e\in E$ and $0\leq k<h$, 
define $B_k^{(e)}= \sum_{i=0}^{h-1}
|N_{\phi'}^i\cap(N_{\phi'}^{i-k} - e)|$.
We readily see that
\[
AC_{\phi'}(e)=
\sum_{g\in E} \zeta_h^{\phi'(g)-\phi'(g+e)}
=\sum_{k=0}^{h-1} B_k^{(e)}\zeta_h^k.
\]
If $e\not \in L$ then we use 
Lemma \ref{lemmahprimevanish} and 
$\sum_{k=0}^{h-1} B_k^{(e)}=|E|$ 
to infer
\begin{equation}
    \label{eqacnull}
    AC_{\phi'}(e)=0 
    \  \Leftrightarrow 
    \ B_k^{(e)}= |E|/h\quad \forall \, k.
\end{equation}

Suppose that 
$\phi$ is generalized perfect.
If $e\not \in L$ then 
$|N_{\phi'}^i\cap(e+N_{\phi'}^{i}) |
 =|E|/h^2$
by \eqref{eqacnull} 
and Lemma \ref{ndslemma}.
On the other hand,
$|N_{\phi'}^i\cap(e+N_{\phi'}^{i})|=0$  
if $e\in L\setminus K$,
by Lemma \ref{lem-phi}.
Hence the number of solutions
$(x+K, y+K)\in R\times R$ of
$x+K-(y+K)= e+K$ is $0$ if  
$e\in L\setminus K$
and $|E|/\! \left(|K|h^2\right)$ 
if $e\not \in L$. 
Accordingly $R$ is an
$\left(
\left|E:L\right|,\,
\left|L:K\right|,\,\left|R\right|,
\,|E|/(|K|h^2)\right)$-relative 
difference set in $E/K$, 
with forbidden subgroup $L/K$. 
Also $|E:L| = |G|$,
$|L:K|=h$, and $|R|=|G|$ 
by Lemma~\ref{GammaTransmit}.
Thus $R$ has the claimed parameters.

Now suppose that $R$ is a 
$(v,h,v,v/h)$-relative difference 
set in $E/K$ with forbidden subgroup $L/K$. 
Then  $|N_{\phi'}^i\cap(N_{\phi'}^{i}-e)|
=|E|/h^2$
for any $e \in E \setminus L$; thus 
$B_0^{(e)}=\allowbreak |E|/h$.
Further, if $z\in L_k$ then
$N_{\phi'}^{i-k}-e+z =
N_{\phi'}^i-e$ by Lemma~\ref{lemmaireduce0},
giving $B_0^{(e)}=
B_k^{(e-z)}$.
Since $B_0^{(e)}$ is 
constant as $e$ ranges over 
$E\setminus L$, this means
that
\[
B_0^{(e)}=B_{i}^{(e)}=|E|/h \quad \forall 
\, i \ \, \mbox{and} \ \, 
\forall \, e \not \in L.
\]
By (\ref{eqacnull}), $\phi$ is 
a $\mathrm{GPhA}({\bf s})$. 
\end{proof}

\vspace{1pt}

\begin{proposition}[{\cite[Theorem~4.1]{EFO15}}]
\label{prp-ortho-rds}
Let $H$ be a finite group whose order
is divisible by a prime $h$. 
Then  $\psi \in 
 Z^2(H,\langle \zeta_h\rangle)$ 
is orthogonal if and only if 
$\{(1,x)\hspace{3pt} |\hspace{3pt} x\in H\}$ 
is a $(|H|, h, |H|, 
\allowbreak |H|/h)$-relative 
difference set in $E_\psi$ with 
forbidden subgroup 
$\langle(\zeta_h,1)\rangle$.
\end{proposition}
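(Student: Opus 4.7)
The plan is to translate each side of the equivalence into a statement about the values of the cocycle $\psi$, and then bridge these via the primality of $h$. I expect to reduce both sides to the same uniformity statement: for every $d \in H \setminus \{1\}$, the map $b \mapsto \psi(d, b)$ hits each element of $\langle \zeta_h\rangle$ exactly $|H|/h$ times. The hard part is the cocycle identity manipulation needed to simplify the off-diagonal entries of $M_\psi M_\psi^*$; once that is in hand, the rest follows by a short combinatorial argument using Lemma~\ref{lemmahprimevanish}.

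First I would tackle the orthogonality of $M_\psi$. The $(a,c)$-entry of $M_\psi M_\psi^*$ is $\sum_{b \in H}\psi(a,b)\,\overline{\psi(c,b)}$, which equals $|H|$ on the diagonal automatically. For $a \neq c$, I would substitute $b = c^{-1}b'$ and apply the cocycle identity twice (once to rewrite $\psi(c, c^{-1}b')^{-1}$ using $\psi(c,c^{-1})\psi(1,b')=\psi(c,c^{-1}b')\psi(c^{-1},b')$, and once to rewrite $\psi(a,c^{-1}b')$ via $\psi(a,c^{-1})\psi(ac^{-1},b')=\psi(a,c^{-1}b')\psi(c^{-1},b')$) to obtain
\[
\sum_{b \in H} \psi(a,b)\,\overline{\psi(c,b)} = \frac{\psi(a,c^{-1})}{\psi(c,c^{-1})}\sum_{b' \in H}\psi(ac^{-1},b').
\]
Since the prefactor is a nonzero root of unity, orthogonality of $M_\psi$ is equivalent to $\sum_{b \in H}\psi(d,b) = 0$ for every $d = ac^{-1} \neq 1$ in $H$.

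Next I would unwind the relative difference set condition. An element $(v,g) \in E_\psi$ lies outside the forbidden subgroup $\langle(\zeta_h,1)\rangle$ iff $g \neq 1$. Since $(v,g)(1,b) = (v\psi(g,b), gb)$, the equation $(1,a) = (v,g)(1,b)$ determines $a = gb$ uniquely and holds iff $\psi(g,b) = v^{-1}$. So $R$ is a $(|H|,h,|H|,|H|/h)$-relative difference set with forbidden subgroup $\langle(\zeta_h,1)\rangle$ iff, for each $g \neq 1$, the map $b \mapsto \psi(g,b)$ is equidistributed on $\langle \zeta_h\rangle$. Equidistribution immediately gives $\sum_b \psi(g,b) = 0$; conversely, writing $m_i = |\{b \in H : \psi(g,b)=\zeta_h^i\}|$, the identity $\sum_{i=0}^{h-1} m_i\zeta_h^i = 0$ combined with the primality of $h$ forces $m_0 = \cdots = m_{h-1}$ by Lemma~\ref{lemmahprimevanish}, and $\sum_i m_i = |H|$ then gives each $m_i = |H|/h$. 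Linking this uniformity statement with the reduction of the first step yields the equivalence.
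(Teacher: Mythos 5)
Your proof is correct and is essentially the standard argument: the paper itself gives no proof of this proposition, merely citing \cite[Theorem~4.1]{EFO15}, and the argument there proceeds exactly as you do --- reduce orthogonality of $M_\psi$ via the cocycle identity to the vanishing of the row sums $\sum_b\psi(d,b)$ for $d\neq 1$, reduce the relative difference set condition to equidistribution of $b\mapsto\psi(d,b)$ over $\langle\zeta_h\rangle$, and bridge the two with the vanishing-sums-of-roots-of-unity lemma for prime $h$. All the details check out (including the implicit use of $\psi(1,g)=\psi(g,1)=1$ for normalized cocycles and the fact that $\{(1,x)\mid x\in H\}$ is a transversal for the central subgroup $\langle(\zeta_h,1)\rangle$), so nothing further is needed.
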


\begin{theorem}
\label{oc-gpha} 
For prime $h$, a (normalized) $h$-ary 
${\bf s}$-array  $\phi$ is a $GPhA({\bf s})$ 
of type ${\bf z}\not={\bf 0}$ if and only if 
$\mu_{\bf z}\partial\phi$ is orthogonal.
\end{theorem}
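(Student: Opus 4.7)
The plan is to chain three results already established in this section. Abbreviate $\psi = \mu_{\mathbf{z}}\partial\phi$. First I would observe that the hypotheses needed to invoke Theorem \ref{th-gpha-rds} and Proposition \ref{prp-ortho-rds} hold: if $\psi$ is orthogonal then $M_\psi \in \BH(v,h)$, and since $h$ is prime Theorem 2.1 forces $h \mid v$; conversely, the relative difference set parameters in Theorem \ref{th-gpha-rds} themselves demand $h \mid v$. So divisibility can be assumed in either direction.

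The first equivalence is Theorem \ref{th-gpha-rds}: $\phi$ is a $\mathrm{GPhA}(\mathbf{s})$ of type $\mathbf{z}$ iff the set
\[
R = \{g+K \in E/K \mid \phi'(g) \equiv 0 \,\bmod\, h\}
\]
is a $(v,h,v,v/h)$-relative difference set in $E/K$ with forbidden subgroup $L/K$. The third equivalence is Proposition \ref{prp-ortho-rds} (applied with $H=G$): $\psi$ is orthogonal iff $\{(1,x) \mid x\in G\}$ is a $(v,h,v,v/h)$-relative difference set in $E_\psi$ with forbidden subgroup $\langle(\zeta_h,1)\rangle$.

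The middle step is where the two pictures are glued together, and it is the only part that requires genuine verification. I would use Proposition \ref{GammaRef} to obtain the isomorphism $\Gamma\colon E_\psi \to E/K$. Commutativity of the diagram there gives $\Gamma(\iota'(\langle\zeta_h\rangle)) = \iota(\langle\zeta_h\rangle) = L/K$, so $\Gamma$ sends the forbidden subgroup $\langle(\zeta_h,1)\rangle$ of $E_\psi$ onto the forbidden subgroup $L/K$ of $E/K$. By Lemma \ref{GammaTransmit}, the same $\Gamma$ sends the transversal $\{(1,x) \mid x\in G\}$ onto $R$. Being an isomorphism of groups, $\Gamma$ transports the relative difference set property verbatim, together with all its parameters. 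Hence the RDS condition on $\{(1,x)\mid x\in G\}$ in $E_\psi$ is equivalent to the RDS condition on $R$ in $E/K$.

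Splicing these three equivalences together yields the theorem. The main obstacle, such as it is, is the careful bookkeeping in the middle step: one needs both the transversal and the forbidden subgroup to be matched \emph{simultaneously} by $\Gamma$, which is exactly what the commutative diagram in Proposition \ref{GammaRef} and Lemma \ref{GammaTransmit} jointly provide. The remainder of the argument is purely formal.
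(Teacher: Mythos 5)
Your proposal is correct and follows exactly the route the paper takes: its proof is the one-line citation of Theorem~\ref{th-gpha-rds}, Proposition~\ref{prp-ortho-rds}, and Lemma~\ref{GammaTransmit}, and your middle step (using the isomorphism $\Gamma$ of Proposition~\ref{GammaRef} to match the transversal with $R$ and the forbidden subgroup $\langle(\zeta_h,1)\rangle$ with $L/K$ simultaneously) is precisely the gluing the authors leave implicit. Your remark on where the divisibility $h\mid v$ comes from is a welcome extra detail that the paper does not spell out.
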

\begin{proof}
This is a consequence of 
Theorem~\ref{th-gpha-rds}, 
Proposition~\ref{prp-ortho-rds},
and Lemma~\ref{GammaTransmit}.
\end{proof} 

\vspace{12pt}

The next theorem 
connects generalized plateaued 
functions  to  GPhAs.

\vspace{5pt}

\begin{theorem}
\label{Theo-gpha-pf}
 Let $\phi\colon  {\mathbb{Z}}_{q}^m 
 \rightarrow {\mathbb{Z}}_{h}$ be a map,
 where $h$ is a prime dividing $q$. 
 The following are equivalent$:$
\begin{enumerate}
\item[{\em (1)}] 
$\phi$ is a $GPhA(q^m)$ of 
type $\mathbf{1};$
\item[{\em (2)}] The expansion 
$\phi'\colon {\mathbb{Z}}_{hq}^m 
\rightarrow {\mathbb{Z}}_{h}$ of $\phi$
of type $\mathbf{1}$
is a generalized plateaued function, 
i.e.,
\[
\Big|
    \sum_{x\in{\mathbb{Z}}_{hq}^m} \zeta_h^{\phi'(x)} \zeta_{hq}^{-v\cdot x}
    \Big|^2=\left\{\begin{array}{cl}
       (h^2q)^{m} &  \quad  v\in {\cal F}\\
         0 & \quad 
         v\in \Z_{hq}^m\setminus \mathcal{F},
    \end{array}\right.
    \]
where  ${\cal F}=\{v \in{\mathbb{Z}}_{hq}^m 
\mid  v\equiv \mathbf{1} \, \bmod h\}$.
\end{enumerate}
\end{theorem}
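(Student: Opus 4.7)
The plan is to apply Lemma~\ref{corredftWT} to the expansion $\phi'\colon \Z_{hq}^m \rightarrow \Z_h$ (so the role of $q$ in that lemma is played here by $hq$). Since the resulting Fourier matrix $D_m$ satisfies $D_m D_m^* = (hq)^m I$, it is invertible, and the relation
\[
|\hat{\phi'}(v)|^2 \,=\, \sum_{g\in \Z_{hq}^m} AC_{\phi'}(g)\, \zeta_{hq}^{g\cdot v}
\]
inverts to
\[
AC_{\phi'}(g) \,=\, (hq)^{-m} \sum_{v\in \Z_{hq}^m} |\hat{\phi'}(v)|^2 \, \zeta_{hq}^{-g\cdot v}.
\]
So it suffices to show that the prescribed vanishing patterns for $AC_{\phi'}$ and for $|\hat{\phi'}|^2$ correspond via this Fourier transform.

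For (1)$\Rightarrow$(2), assume $\phi$ is a $GPhA(q^m)$ of type $\mathbf{1}$. Then $AC_{\phi'}(g)=0$ for $g\in E\setminus L$, while Corollary~\ref{lemmaach} gives $AC_{\phi'}(g)=\zeta_h^{-\sum_i y_i}(hq)^m$ for $g=(y_1q,\ldots,y_mq)\in L$ with $0\le y_i<h$. Substituting and using $\zeta_{hq}^{qy\cdot v}=\zeta_h^{y\cdot v}$,
\[
|\hat{\phi'}(v)|^2 \,=\, (hq)^m \sum_{y\in \Z_h^m} \zeta_h^{y\cdot v - \sum_i y_i} \,=\, (hq)^m \prod_{i=1}^m \sum_{y_i=0}^{h-1} \zeta_h^{y_i(v_i-1)}.
\]
Each inner sum is $h$ if $v_i\equiv 1 \bmod h$ and $0$ otherwise, yielding $(h^2q)^m$ for $v\in \mathcal{F}$ and $0$ for $v\notin \mathcal{F}$.

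For (2)$\Rightarrow$(1), assume the values of $|\hat{\phi'}|^2$ are as in (2). Parametrize $\mathcal{F}$ by $v=\mathbf{1}+hw$ with $w\in \Z_q^m$ and compute, via the inverse Fourier relation,
\[
AC_{\phi'}(g) \,=\, \frac{(h^2q)^m}{(hq)^m}\sum_{v\in\mathcal{F}} \zeta_{hq}^{-g\cdot v} \,=\, h^m\, \zeta_{hq}^{-\sum_i g_i} \prod_{i=1}^m \sum_{w_i=0}^{q-1} \zeta_q^{-g_i w_i}.
\]
Each inner sum vanishes unless $q \mid g_i$, so $AC_{\phi'}(g)=0$ whenever $g\notin L$, which is exactly the GPhA condition of type $\mathbf{1}$.

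The main delicate point is bookkeeping the two distinct roots of unity ($\zeta_h$ versus $\zeta_{hq}$) and matching the subset $L\subseteq E$ on the autocorrelation side with $\mathcal{F}\subseteq \Z_{hq}^m$ on the transform side; once the Fourier matrix is inverted via $D_mD_m^*=(hq)^mI$, both implications reduce to factorizing a sum as a product of one-dimensional character sums.
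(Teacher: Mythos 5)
Your proposal is correct and follows essentially the same route as the paper: both directions rest on Lemma~\ref{corredftWT} applied with $hq$ in place of $q$, with Corollary~\ref{lemmaach} supplying the values of $AC_{\phi'}$ on $L$ and the invertibility $D_mD_m^*=(hq)^mI$ handling the converse. You merely write out the $(2)\Rightarrow(1)$ computation explicitly where the paper dismisses it as ``similarly.''
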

\begin{proof}
 Let $u=(y_1 q, \ldots, y_m q ) \in L$ 
and $v=(y_1' q+a_1, \ldots, \allowbreak 
y_m' q+a_m) \in E= \Z_{hq}^m$ where 
$0\leq y_j, y_j'\leq \allowbreak h-1$
and $0\leq a_j\leq q-1$. Then 
$u \cdot v \equiv 
(a_1y_1+\cdots+a_my_m) q \, \bmod hq$.
Hence, if $\phi$ is a GPhA$(q^m)$ of 
type $\mathbf{1}$,
then by Lemma~\ref{corredftWT}  and
Corollary~\ref{lemmaach},
\[
\Big|\sum_{x\in{\mathbb{Z}}_{hq}^m} 
\zeta_h^{\phi'(x)} \zeta_{hq}^{-v \cdot x}
\Big|^2=\sum_{u\in L} 
AC_{\phi'}(u) 
\zeta_{hq}^{u \cdot v}=
(hq)^m\sum_{0\leq y_1,\ldots,y_m\leq h-1} 
\zeta_{hq}^{-(y_1\, + \, \cdots \, +\, y_m)q 
\, +\, u \cdot v}
\]
The rightmost displayed summation is equal to
\[
(hq)^m\sum_{0\leq y_1,\dots,y_m\leq h-1} 
\zeta_{h}^{(a_1-1)y_1\, +\, \cdots\, 
+\, (a_m-1)y_m}=
\left\{\begin{array}{cl}
 (h^2q)^m    & \quad a_k \equiv 1 
  \, \bmod h\ \ \forall \,k \\
    0 & \quad \mbox{otherwise} .
\end{array}\right.
\]
This proves (1) $\Rightarrow$ (2). 
We get $(2)\Rightarrow (1)$ similarly, 
appealing once more 
to Lemma \ref{corredftWT} and taking
into account that $D_m D_m^*=(hq)^m I_m$.
\end{proof}

\vspace{12pt}

Now we can fulfil our intention
stated just after 
Theorem~\ref{prop-ben-perfe-rds}.

\vspace{5pt}

\begin{theorem}\label{mainaef}
Let $h$ be a prime divisor of 
 $q$, and let 
 $\phi\colon {\mathbb{Z}}_{q}^m 
 \rightarrow \allowbreak {\mathbb{Z}}_{h}$ 
 be an array with expansion
 $\phi'$ 
 of type $\mathbf{z}\neq {\bf 0}$. 
\begin{itemize}
\item[{\rm (a)}]
The following  are equivalent$:$
\begin{enumerate}
 \item[{\rm (i)}] 
 $\mu_{\bf z}\partial\phi$ is 
symmetric and orthogonal, i.e.,  
 $M_{\mu_{\bf z}\partial\phi}$
 is a symmetric Butson Hadamard matrix$;$
\item[{\rm (ii)}] 
$\phi$ is a $GPhA(q^m)$ 
of type  $\mathbf{z};$
\item[{\rm (iii)}] 
$\{g+K\in E/K \mid  \phi'(g)=0\}$ is a 
 non-splitting 
$(q^m,h,q^m,q^{m}/h)$-relative 
difference set in $E/K$
with forbidden subgroup $L/K$.
\end{enumerate}
\item[{\rm (b)}]
If $\mathbf{z}={\bf 1}$
then {\rm (i)}--{\em (iii)}
are equivalent to
\begin{enumerate}
\item[{\rm (iv)}]  $\phi'$ is a 
generalized plateaued function, i.e.,
\[
\Big|
\sum_{x\in{\mathbb{Z}}_{hq}^m} 
\zeta_h^{\phi'(x)} 
\zeta_{hq}^{-v\cdot x}\Big|^2=
\left\{
\begin{array}{cl}
 (h^2q)^{m} &  \quad v\in {\cal F}\\
    0 & \quad \mbox{otherwise}, 
\end{array}\right.
\]
where  ${\cal F}= 
\{ v\in \mathbb{Z}_{hq}^m 
\mid 
v \equiv \mathbf{1} \, \bmod h \}$.
\end{enumerate}
\item[{\rm (c)}] 
Let $h=q$ and 
$\mathbf{z}=\mathbf{1}$.
Suppose that, for all 
$y\in {\mathbb{Z}}_{h}^m\setminus\{\bf 0\}$ with 
$\sum y_i\equiv 0\, \bmod h$, there exists 
$x\in {\mathbb{Z}}_{h}^m$ satisfying
\eqref{eq-phi-lineal}.
Then {\rm (i)}--{\rm (iv)} 
are equivalent to
\begin{enumerate}
\item[{\rm (v)}] $\phi'$ is a GPBF.
\end{enumerate}
\end{itemize}
\end{theorem}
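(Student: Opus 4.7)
The plan is to assemble the stated equivalences by funnelling them through condition (ii), invoking the results already established in Sections~\ref{sectionB} and \ref{sec:main}.

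For part (a), the step (i)$\Leftrightarrow$(ii) reduces to Theorem~\ref{oc-gpha} once I observe that the ``symmetric'' clause in (i) is automatic. Indeed, since $G$ is abelian, $\partial\phi(a,b)=\phi(a)^{-1}\phi(b)^{-1}\phi(ab)$ is symmetric in $a,b$; and by Proposition~\ref{pro-fz}(i), $\mu_{\bf z}(x,y)=\prod_{i:\,z_i=1}\gamma_{s_i}(x_i,y_i)$ with $\gamma_{s_i}(j,k)=\zeta_h^{\lfloor(j+k)/s_i\rfloor}$ manifestly symmetric in $j,k$. Hence $\mu_{\bf z}\partial\phi$ is always symmetric, so $M_{\mu_{\bf z}\partial\phi}$ is symmetric whenever it is Butson, and (i) becomes simply orthogonality of $\mu_{\bf z}\partial\phi$. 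Next, (ii)$\Leftrightarrow$(iii) is the content of Theorem~\ref{th-gpha-rds} specialised to $\mathbf{s}=(q,\ldots,q)$, which gives $v=q^m$ and the stated parameters. The new ingredient to verify is that the relative difference set is \emph{non-splitting}. This follows from Proposition~\ref{pro-fz}(ii): since $s_i=q$ and $h\mid q$, while $\mathbf{z}\neq\mathbf{0}$ ensures at least one $i$ with $z_i=1$, we have $\gcd(s_i,h)=h>1$ for that $i$, so $\mu_{\bf z}$ is not a coboundary. Therefore the central extension \eqref{sejed} does not split, meaning $L/K$ is not a direct factor of $E/K$, and the relative difference set $R$ is non-splitting.

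Part (b) is immediate from Theorem~\ref{Theo-gpha-pf} with $\mathbf{z}=\mathbf{1}$, which gives (ii)$\Leftrightarrow$(iv) verbatim.

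Part (c) is obtained from Proposition~\ref{pro-gpbf-gpha}: under the stated hypothesis on $\phi$ (the existence of a witness $x$ satisfying \eqref{eq-phi-lineal} for every non-trivial $y$ with $\sum y_i\equiv 0\bmod h$), and with $h=q$ and $\mathbf{z}=\mathbf{1}$ so that $GPhA(q^m)=GPhA(h^m)$, Proposition~\ref{pro-gpbf-gpha} provides (ii)$\Leftrightarrow$(v). The main obstacle is not any deep calculation but rather the careful bookkeeping in item (iii): translating Proposition~\ref{prp-ortho-rds} (which produces an RDS in $E_{\mu_{\bf z}\partial\phi}$) through the isomorphism $\Gamma$ of Proposition~\ref{GammaRef} so that the RDS lives in $E/K$ with forbidden subgroup $L/K$, and then observing that non-triviality of $[\mu_{\bf z}]\in H^2(G,\langle\zeta_h\rangle)$ forces the relative difference set to be non-splitting. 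Everything else is a straightforward citation of prior lemmas.
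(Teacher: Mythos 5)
Your proposal is correct and follows essentially the same route as the paper: both funnel everything through (ii), citing Theorem~\ref{oc-gpha}, Theorem~\ref{th-gpha-rds}, Theorem~\ref{Theo-gpha-pf}, and Proposition~\ref{pro-gpbf-gpha} for the four equivalences, with Proposition~\ref{pro-fz}(ii) supplying the non-splitting claim in (iii). Your explicit observation that symmetry of $\mu_{\bf z}\partial\phi$ is automatic (since $G$ is abelian and each $\gamma_{s_i}$ is symmetric) is a small but worthwhile elaboration that the paper leaves implicit.
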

\begin{proof}
The equivalences ${\rm (i)}\Leftrightarrow{\rm (ii)}$, 
${\rm (ii)}\Leftrightarrow{\rm (iii)}$, 
${\rm (ii)}\Leftrightarrow {\rm (iv)}$, and 
${\rm (ii)}\Leftrightarrow{\rm (v)}$ follow from, 
respectively,  Theorems~\ref{oc-gpha},  
\ref{th-gpha-rds}, \ref{Theo-gpha-pf}, and 
Proposition~\ref{pro-gpbf-gpha}.
(Proposition~\ref{pro-fz}~(ii) justifies
 non-splitting in (iii).)
\end{proof}

\begin{remark}
\label{GPBFCoinc}
{\em 
In \cite[Definition 2.2]{WZ07},
 a map $\phi\colon
 {\mathbb{Z}}_{q}^m 
 \rightarrow {\mathbb{Z}}_{q}$ 
 is called a  generalized 
 partially bent function if
$(q^m-N_F)(q^m-N_C)=q^m$ where $N_F=
|\{v\in 
{\mathbb{Z}}_{q}^m \mid
\sum_{x\in{\mathbb{Z}}_{q}^m} 
\zeta_q^{\phi(x)-v\cdot x}=0 \}|$ 
and $N_C=|\{v\in 
{\mathbb{Z}}_{q}^m \mid 
AC_{-\phi}(v)=0 \}|$.
Previously, Carlet~\cite[Definition 1]{Car93}
introduced partially bent functions
for $q=2$. The coincidence with 
 our Definition~\ref{dfgpbf} is 
 shown in \cite[Theorem~2]{Wang97} 
and  \cite[Proposition 8]{MOS18}. 
Observe that, for
$\mathbf{z}=\mathbf{1}$, we have
$|L| \cdot |{\cal F}|= 
\allowbreak (hq)^m$,
$|L|=(hq)^m-N_C$, and 
$|{\cal F}|=(hq)^m-N_F$.}
\end{remark}

\begin{remark}
\label{Remark4dot14}
{\em For  $q$ prime, if
$\phi:\Z_q^m\rightarrow \Z_q$ is 
a $\mathrm{GPqA}(q^m)$
of type $\bf 1$, 
then $\phi'$ 
is a $2m$-generalized 
plateaued function.}
\end{remark}

\section{Examples}\label{sec:examples}

\begin{example}\label{egbpa}
{\em 
Let $\phi$ be the map on 
${\mathbb Z}_2^3$  with layers
\[
A_0=
{\scriptsize
\begin{bmatrix}
0 &    \ 1 \\
1 &  \ 1 
\end{bmatrix}}
\quad \mbox{and} \quad  
A_1={\scriptsize
\begin{bmatrix}
0 & \ 1
 \\
0 & \ 0  
\end{bmatrix}}.
\]
Here $A_i$ 
is the layer on 
$\{i\} \times {\mathbb Z}_2 
 \times  {\mathbb Z}_2$, and 
 $\phi(i,j,k)=A_i(j,k)$. Then 
$\phi$ is a GPBA($2^3$), i.e.,
a GP$2$A($2^3$), 
of type $\bf 1$.
It has orthogonal cocycle  
$\mu_{\bf 1}\partial_2\partial_3
\partial_4\partial_6$, where  
$\partial_i=\partial \phi_i$
for the multiplicative 
Kronecker delta $\phi_i$ of $\alpha_i$, 
with $\alpha_0=(0,0,0)$,
 $\alpha_1=(0,0,1)$, 
and so on. 
%
We label rows and columns with the 
elements of 
$\Z_2^3=\{\alpha_0,\ldots,\alpha_7\}$
in this ordering, and display the 
cocyclic Hadamard matrix
$M_{\mu_{\bf 1}\partial\phi}$
as a Hadamard (entrywise) product 
$M_{\mu_{\bf 1}}\circ M_{\partial\phi}$
in logarithmic form:
\[
{\scriptsize
\begin{bmatrix}
0 & 0 & 0 & 0 & 0 & 0 & 0 & 0  \\
0 & 1 & 0 & 1 & 0 & 1 & 0 & 1  \\
0 & 0 & 1 & 1 & 0 & 0 & 1 & 1  \\
0 & 1 & 1 & 0 & 0 & 1 & 1 & 0  \\
0 & 0 & 0 & 0 & 1 & 1 & 1 & 1  \\
0 & 1 & 0 & 1 & 1 & 0 & 1 & 0  \\
0 & 0 & 1 & 1 & 1 & 1 & 0 & 0  \\
0 & 1 & 1 & 0 & 1 & 0 & 0 & 1 
\end{bmatrix}
}
\circ
{\scriptsize
\begin{bmatrix}
0 & 0 & 0 & 0 & 0 & 0 & 0 & 0  \\
0 & 0 & 1 & 1 & 0 & 0 & 1 & 1  \\
0 & 1 & 0 & 1 & 1 & 0 & 1 & 0  \\
0 & 1 & 1 & 0 & 1 & 0 & 0 & 1  \\
0 & 0 & 1 & 1 & 0 & 0 & 1 & 1  \\
0 & 0 & 0 & 0 & 0 & 0 & 0 & 0  \\
0 & 1 & 1 & 0 & 1 & 0 & 0 & 1   \\
0 & 1 & 0 & 1 & 1 & 0 & 1 & 0   
\end{bmatrix}
}
=
{\scriptsize
\begin{bmatrix}
0 & 0 & 0 & 0 & 0 & 0 & 0 & 0  \\
0 & 1 & 1 & 0 & 0 & 1 & 1 & 0  \\
0 & 1 & 1 & 0 & 1 & 0 & 0 & 1  \\
0 & 0 & 0 & 0 & 1 & 1 & 1 & 1  \\
0 & 0 & 1 & 1 & 1 & 1 & 0 & 0  \\
0 & 1 & 0 & 1 & 1 & 0 & 1 & 0  \\
0 & 1 & 0 & 1 & 0 & 1 & 0 & 1  \\
0 & 0 & 1 & 1 & 0 & 0 & 1 & 1  
\end{bmatrix}.
}
\]
The expansion 
$\phi' \colon {\mathbb{Z}}_{4}^3 
\rightarrow {\mathbb{Z}}_{2}$ 
is defined by 
the layers $B_i$ on 
$\{i\}\times \Z_4\times\Z_4$,  
$0\leq i\leq 3$, where
\[
B_i=\left\{
\begin{array}{cc}
  {\small 
\left[\begin{array}{ccrrr}
A_0 &  A_0 \oplus J  \\ 
A_0 \oplus J  & A_0 
\end{array}
\right]}   &  i=0,2 \\[7mm]
{\small  
\left[\begin{array}{ccrrr}
A_1 \oplus J  &  
A_1 \\
A_1 &  A_1 \oplus J  
\end{array}
\right]}
 & i=1,3, 
\end{array}
\right.
\]
 $J$ denoting the all $1$s matrix.
 We have
$L=\{(0,0,0),(0,0,2),(0,2,0),(0,2,2),
(2,0,0),(2,0,2),
\allowbreak (2,2,0),(2,2,2)\}$, 
\[
AC_{\phi'}(v)=\left\{\begin{array}{cl}
   (-1)^{\mathrm{wt}(v)} \, 64 &  \ v\in L\\
0     & \ v\notin L,
\end{array}\right.
\]
${\cal F}=\{(1,1,1),(1,1,3),(1,3,1),
(1,3,3), (3,1,1),(3,1,3),(3,3,1),
(3,3,3)\}$, and 
\[
 \Big|\sum_{x\in{\mathbb{Z}}_{4}^3} 
 \zeta_2^{\phi'(x)} 
 \zeta_4^{-v\cdot x}\Big|^2=
 \left\{\begin{array}{cl}
    512 &  \ v\in {\cal F}\\
0     & \ v\notin{\cal F}.
\end{array}\right.
\]
Therefore $\phi'$ is a GPBF.} 
\end{example}

\begin{example}\label{egsixteen}
{\em The map $\phi={\scriptsize
\left[
\begin{array}{cccc}
0     &  1  & 1 & 1 \\
1     &  1  & 0 & 1 \\
0      & 1 & 0  & 0 \\
0 & 0 & 0 & 1
\end{array}
\right]
}$
on $\Z_4^2$ is a GPBA$(4^2)$ 
of type $\bf 1$.
Its orthogonal cocycle is  
$\mu_{\bf 1}\partial\phi$.
If we label rows and columns
with the elements of 
$\Z_4^2=\{\alpha_0=(0,0),\alpha_1=(0,1),
\alpha_2=(0,2),\ldots,\alpha_{15}=(3,3)\}$, 
then the cocyclic Hadamard matrix
$M_{\mu_{\bf 1}}\circ M_{\partial \phi}$
 in logarithmic form is} 
\[
{\footnotesize
\renewcommand{\arraycolsep}{.15cm}\left[
\begin{array}{cccccccccccccccc}
 0 &  0 &  0 &  0 &  0 &  0 &  0 &  0 &  0 &  0 &  0 &  0 &  0 &  0 &  0 &  0 \\
 0 &  0 &  0 &  1 &  0 &  0 &  0 &  1 &  0 &  0 &  0 &  1 &  0 &  0 &  0 &  1 \\
 0 &  0 &  1 &  1 &  0 &  0 &  1 &  1 &  0 &  0 &  1 &  1 &  0 &  0 &  1 &  1 \\
 0 &  1 &  1 &  1 &  0 &  1 &  1 &  1 &  0 &  1 &  1 &  1 &  0 &  1 &  1 &  1 \\
 0 &  0 &  0 &  0 &  0 &  0 &  0 &  0 &  0 &  0 &  0 &  0 &  1 &  1 &  1 &  1 \\
 0 &  0 &  0 &  1 &  0 &  0 &  0 &  1 &  0 &  0 &  0 &  1 &  1 &  1 &  1 &  0 \\
 0 &  0 &  1 &  1 &  0 &  0 &  1 &  1 &  0 &  0 &  1 &  1 &  1 &  1 &  0 &  0 \\
 0 &  1 &  1 &  1 &  0 &  1 &  1 &  1 &  0 &  1 &  1 &  1 &  1 &  0 &  0 &  0 \\
 0 &  0 &  0 &  0 &  0 &  0 &  0 &  0 &  1 &  1 &  1 &  1 &  1 &  1 &  1 &  1 \\
 0 &  0 &  0 &  1 &  0 &  0 &  0 &  1 &  1 &  1 &  1 &  0 &  1 &  1 &  1 &  0 \\
 0 &  0 &  1 &  1 &  0 &  0 &  1 &  1 &  1 &  1 &  0 &  0 &  1 &  1 &  0 &  0 \\
 0 &  1 &  1 &  1 &  0 &  1 &  1 &  1 &  1 &  0 &  0 &  0 &  1 &  0 &  0 &  0 \\
 0 &  0 &  0 &  0 &  1 &  1 &  1 &  1 &  1 &  1 &  1 &  1 &  1 &  1 &  1 &  1 \\
 0 &  0 &  0 &  1 &  1 &  1 &  1 &  0 &  1 &  1 &  1 &  0 &  1 &  1 &  1 &  0 \\
 0 &  0 &  1 &  1 &  1 &  1 &  0 &  0 &  1 &  1 &  0 &  0 &  1 &  1 &  0 &  0 \\
 0 &  1 &  1 &  1 &  1 &  0 &  0 &  0 &  1 &  0 &  0 &  0 &  1 &  0 &  0 &  0 
\end{array}\right]
}
\circ
{\footnotesize
\renewcommand{\arraycolsep}{.15cm}\left[
\begin{array}{cccccccccccccccc}
 0 &  0 &  0 &  0 &  0 &  0 &  0 &  0 &  0 &  0 &  0 &  0 &  0 &  0 &  0 &  0  \\
  0 &  0 &  1 &  1 &  1 &  1 &  0 &  0 &  0 &  1 &  1 &  0 &  1 &  0 &  0 &  1  \\
  0 &  1 &  0 &  1 &  0 &  1 &  0 &  1 &  1 &  0 &  1 &  0 &  1 &  0 &  1 &  0  \\
  0 &  1 &  1 &  0 &  1 &  0 &  0 &  1 &  1 &  1 &  0 &  0 &  0 &  0 &  1 &  1  \\
  0 &  1 &  0 &  1 &  1 &  0 &  0 &  1 &  1 &  0 &  1 &  0 &  0 &  1 &  1 &  0  \\
  0 &  1 &  1 &  0 &  0 &  0 &  0 &  0 &  1 &  1 &  0 &  0 &  1 &  0 &  1 &  0  \\
  0 &  0 &  0 &  0 &  0 &  0 &  1 &  1 &  0 &  0 &  0 &  0 &  0 &  0 &  1 &  1  \\
  0 &  0 &  1 &  1 &  1 &  0 &  1 &  0 &  0 &  1 &  1 &  0 &  1 &  1 &  1 &  1  \\
  0 &  0 &  1 &  1 &  1 &  1 &  0 &  0 &  0 &  0 &  1 &  1 &  1 &  1 &  0 &  0  \\
  0 &  1 &  0 &  1 &  0 &  1 &  0 &  1 &  0 &  0 &  0 &  0 &  0 &  0 &  0 &  0  \\
  0 &  1 &  1 &  0 &  1 &  0 &  0 &  1 &  1 &  0 &  0 &  1 &  0 &  1 &  1 &  0  \\
  0 &  0 &  0 &  0 &  0 &  0 &  0 &  0 &  1 &  0 &  1 &  0 &  1 &  0 &  1 &  0  \\
  0 &  1 &  1 &  0 &  0 &  1 &  0 &  1 &  1 &  0 &  0 &  1 &  1 &  0 &  1 &  0  \\
  0 &  0 &  0 &  0 &  1 &  0 &  0 &  1 &  1 &  0 &  1 &  0 &  0 &  0 &  1 &  1  \\
  0 &  0 &  1 &  1 &  1 &  1 &  1 &  1 &  0 &  0 &  1 &  1 &  1 &  1 &  1 &  1  \\
  0 &  1 &  0 &  1 &  0 &  0 &  1 &  1 &  0 &  0 &  0 &  0 &  0 &  1 &  1 &  0 \end{array}\right]
}
\]
\[
=
{\footnotesize
\renewcommand{\arraycolsep}{.15cm}
\left[
\begin{array}{cccccccccccccccc}
0 & 0 & 0 & 0 & 0 & 0 & 0 & 0 & 0 & 0 & 0 & 0 & 0 & 0 & 0 & 0 \\ 
0 & 0 & 1 & 0 & 1 & 1 & 0 & 1 & 0 & 1 & 1 & 1 & 1 & 0 & 0 & 0 \\ 
0 & 1 & 1 & 0 & 0 & 1 & 1 & 0 & 1 & 0 & 0 & 1 & 1 & 0 & 0 & 1 \\ 
0 & 0 & 0 & 1 & 1 & 1 & 1 & 0 & 1 & 0 & 1 & 1 & 0 & 1 & 0 & 0 \\ 
0 & 1 & 0 & 1 & 1 & 0 & 0 & 1 & 1 & 0 & 1 & 0 & 1 & 0 & 0 & 1 \\ 
0 & 1 & 1 & 1 & 0 & 0 & 0 & 1 & 1 & 1 & 0 & 1 & 0 & 1 & 0 & 0 \\ 
0 & 0 & 1 & 1 & 0 & 0 & 0 & 0 & 0 & 0 & 1 & 1 & 1 & 1 & 1 & 1 \\ 
0 & 1 & 0 & 0 & 1 & 1 & 0 & 1 & 0 & 0 & 0 & 1 & 0 & 1 & 1 & 1 \\ 
0 & 0 & 1 & 1 & 1 & 1 & 0 & 0 & 1 & 1 & 0 & 0 & 0 & 0 & 1 & 1 \\ 
0 & 1 & 0 & 0 & 0 & 1 & 0 & 0 & 1 & 1 & 1 & 0 & 1 & 1 & 1 & 0 \\ 
0 & 1 & 0 & 1 & 1 & 0 & 1 & 0 & 0 & 1 & 0 & 1 & 1 & 0 & 1 & 0 \\ 
0 & 1 & 1 & 1 & 0 & 1 & 1 & 1 & 0 & 0 & 1 & 0 & 0 & 0 & 1 & 0 \\ 
0 & 1 & 1 & 0 & 1 & 0 & 1 & 0 & 0 & 1 & 1 & 0 & 0 & 1 & 0 & 1 \\ 
0 & 0 & 0 & 1 & 0 & 1 & 1 & 1 & 0 & 1 & 0 & 0 & 1 & 1 & 0 & 1 \\ 
0 & 0 & 0 & 0 & 0 & 0 & 1 & 1 & 1 & 1 & 1 & 1 & 0 & 0 & 1 & 1 \\ 
0 & 0 & 1 & 0 & 1 & 0 & 1 & 1 & 1 & 0 & 0 & 0 & 1 & 1 & 1 & 0  
\end{array}\right].
}
\]
{\em 
The expansion $\phi' 
\colon {\mathbb{Z}}_{8}^2 
\rightarrow {\mathbb{Z}}_{2}$ 
is defined by}
\[
{\scriptsize
\begin{bmatrix}
0 & 1 & 1 & 1 & 1 & 0 & 0 & 0 \\
1 & 1 & 0 & 1 & 0 & 0 & 1 & 0 \\
0 & 1 & 0 & 0 & 1 & 0 & 1 & 1 \\
0 & 0 & 0 & 1 & 1 & 1 & 1 & 0 \\
1 & 0 & 0 & 0 & 0 & 1 & 1 & 1 \\
0 & 0 & 1 & 0 & 1 & 1 & 0 & 1 \\
1 & 0 & 1 & 1 & 0 & 1 & 0 & 0 \\
1 & 1 & 1 & 0 & 0 & 0 & 0 & 1
  
\end{bmatrix},
}
\]
{\em 
with $L=\{(0,0),(0,4),(4,0),(4,4)\}$, 
\[
AC_{\phi'}(v)=
\left\{\begin{array}{cl}
  (-1)^{\mathrm{wt}(v)}\, 64 &  \ \ v\in L\\
0     & \ \ v\notin L,
\end{array}\right.
\]
\begin{align*}
{\cal F}&=\{(1,1),(1,3),(1,5),(1,7), 
(3,1),(3,3),(3,5),(3,7), (5,1),(5,3),
(5,5),(5,7),\\
& \hspace{20pt} (7,1),(7,3), (7,5),(7,7) \},
\end{align*} 
and 
\[
 \Big|\sum_{x\in{\mathbb{Z}}_{8}^2} 
 \zeta_2^{\phi'(x)} \zeta_8^{-v\cdot x}
 \Big|^2=
 \left\{
 \begin{array}{cl}
    256 & \ v\in {\cal F}\\
     0  & \ v\notin{\cal F}.
\end{array}\right.
\]
Therefore $\phi'$ is a GPBF. }
\end{example}

\begin{example}\label{egthirdroots}
{\em 
The map 
$\phi = {\small
\left[
\begin{array}{ccc}
0     &  0  & 0 \\
0     &  1  & 0 \\
2      & 2 & 1
\end{array}
\right]
}$
on $\Z_3^2$ is a GP$3$A$(3^2)$ 
of type $\bf 1$. Its 
orthogonal cocycle is  
$\mu_{\bf 1}\partial\phi$.
Labeling the rows and columns with 
the elements of $\Z_3^2=
\{\alpha_0=(0,0),\alpha_1=(0,1),\alpha_2=(0,2),
\ldots,\alpha_8=(2,2)\}$,
we display the cocyclic Butson matrix
$M_{\mu_{\bf 1}}\circ M_{\partial\phi}$:}
\[
{\scriptsize
\begin{bmatrix}
0 & 0 & 0 & 0 & 0 & 0 & 0 & 0 & 0 \\
0 & 0 & 1 & 0 & 0 & 1 & 0 & 0 & 1 \\
0 & 1 & 1 & 0 & 1 & 1 & 0 & 1 & 1 \\
0 & 0 & 0 & 0 & 0 & 0 & 1 & 1 & 1 \\
0 & 0 & 1 & 0 & 0 & 1 & 1 & 1 & 2 \\
0 & 1 & 1 & 0 & 1 & 1 & 1 & 2 & 2 \\
0 & 0 & 0 & 1 & 1 & 1 & 1 & 1 & 1 \\
0 & 0 & 1 & 1 & 1 & 2 & 1 & 1 & 2 \\
0 & 1 & 1 & 1 & 2 & 2 & 1 & 2 & 2 
\end{bmatrix}
}
\circ
{\scriptsize
\begin{bmatrix}
0 & 0 & 0 & 0 & 0 & 0 & 0 & 0 & 0 \\
0 & 0 & 0 & 1 & 2 & 0 & 0 & 2 & 1 \\
0 & 0 & 0 & 0 & 2 & 1 & 2 & 0 & 1 \\
0 & 1 & 0 & 2 & 1 & 1 & 1 & 1 & 2 \\
0 & 2 & 2 & 1 & 2 & 1 & 0 & 0 & 1 \\
0 & 0 & 1 & 1 & 1 & 2 & 1 & 1 & 2 \\
0 & 0 & 2 & 1 & 0 & 1 & 2 & 0 & 0 \\
0 & 2 & 0 & 1 & 2 & 1 & 0 & 2 & 0 \\
0 & 1 & 1 & 2 & 1 & 2 & 0 & 0 & 2 
\end{bmatrix}
}
=
{\scriptsize
\begin{bmatrix}
0 & 0 & 0 & 0 & 0 & 0 & 0 & 0 & 0 \\
0 & 0 & 1 & 1 & 2 & 1 & 0 & 2 & 2 \\
0 & 1 & 1 & 0 & 0 & 2 & 2 & 1 & 2 \\
0 & 1 & 0 & 2 & 1 & 1 & 2 & 2 & 0 \\
0 & 2 & 0 & 1 & 2 & 2 & 1 & 1 & 0 \\
0 & 1 & 2 & 1 & 2 & 0 & 2 & 0 & 1 \\
0 & 0 & 2 & 2 & 1 & 2 & 0 & 1 & 1 \\
0 & 2 & 1 & 2 & 1 & 0 & 1 & 0 & 2 \\
0 & 2 & 2 & 0 & 0 & 1 & 1 & 2 & 1 
\end{bmatrix}.
}
\]
\noindent
{\em 
The expansion $\phi' 
\colon {\mathbb{Z}}_{9}^2 
\rightarrow {\mathbb{Z}}_{3}$ 
is defined by}
\[
{\scriptsize
\begin{bmatrix}
    0 & 0 & 0 & 1 & 1 & 1 & 2 & 2 & 2 \\
 0 & 1 & 0 & 1 & 2 & 1 & 2 & 0 & 2 \\
 2 & 2 & 1 & 0 & 0 & 2 & 1 & 1 & 0 \\
 1 & 1 & 1 & 2 & 2 & 2 & 0 & 0 & 0 \\
 1 & 2 & 1 & 2 & 0 & 2 & 0 & 1 & 0 \\
 0 & 0 & 2 & 1 & 1 & 0 & 2 & 2 & 1 \\
 2 & 2 & 2 & 0 & 0 & 0 & 1 & 1 & 1 \\
 2 & 0 & 2 & 0 & 1 & 0 & 1 & 2 & 1 \\
 1 & 1 & 0 & 2 & 2 & 1 & 0 & 0 & 2
\end{bmatrix},
}
\]
\noindent
{\em 
with $L=\{(0,0),(0,3),(0,6),(3,0),
(3,3),(3,6),(6,0),(6,3),(6,6)\}$,
\[
AC_{\phi'}((v_1,v_2))=
\left\{\begin{array}{cl}
  81 \,\zeta_3^{-(v_1+v_2)/3}  
  &\  \ (v_1,v_2)\in L\\
  0 & \ \ (v_1,v_2)\notin L,
\end{array}\right.
\]
${\cal F}=\{(1,1),(1,4),(1,7),(4,1), (4,4),(4,7), 
(7,1),(7,4),(7,7)\}$, and 
\[
 \Big|\sum_{x\in{\mathbb{Z}}_{9}^2} 
  \zeta_3^{\phi'(x)} \zeta_9^{-v\cdot x}
 \Big|^2=
 \left\{\begin{array}{cl}
    729 & \ \ v\in {\cal F}\\
     0  & \ \ v\notin{\cal F}.
\end{array}\right.
\]
Therefore $\phi'$ is a
GPBF.
Also $\phi'$ is a  
$4$-generalized plateaued function
(see Remark~\ref{Remark4dot14}).}
\end{example}

It may be checked that the sufficient 
condition \eqref{eq-phi-lineal} is satisfied 
in each of the
Examples \ref{egbpa}, \ref{egsixteen}, 
\ref{egthirdroots}.

We now recite a bit more 
algebraic design theory
 in preparation for 
our closing result, which provides
an infinite family of 
GPhAs of type $\bf 1$.

\begin{proposition}[cf.~{\cite[Theorem 15.8.4]{ADT}}]
\label{KroneckerBuild}
Let $G_{\bf s} = \mathbb{Z}_{s_1} \times \cdots 
\times \mathbb{Z}_{s_{m}}$, 
$G_{\bf t} = \mathbb{Z}_{t_1} \times \cdots 
\times \mathbb{Z}_{t_{n}}$, and 
$G = G_{\bf s} \times G_{\bf t}$. 
Suppose that 
$\psi\partial\phi_{\bf s} 
\in  Z^2( G_{\bf s}
, \langle \zeta_{k_{1}} \rangle)$ and 
$\rho\partial\phi_{\bf t}\in 
Z^2( G_{\bf t},
\langle \zeta_{k_{2}} \rangle)$ 
are orthogonal. 
Let $k = \mathrm{lcm}(k_{1},k_{2})$.
Define $\varphi\in
Z^2(G, \langle \zeta_{k} \rangle)$ by
$\varphi(g_{s}g_{t},h_{s}h_{t}) = 
\psi(g_{s},h_{s})\rho(g_{t},h_{t})$,  
and define a map $\phi$ on $G$
by $\phi(g_{s}g_{t}) = 
\phi_{\bf s}(g_{s}) \phi_{\bf t}(g_{t})$.
Then $\varphi\partial\phi\in
Z^2(G,\langle \zeta_{k} \rangle)$
is orthogonal, with cocyclic matrix 
$[\psi\partial\phi_{\bf s}]\otimes 
[\rho\partial\phi_{\bf t}]$.
\end{proposition}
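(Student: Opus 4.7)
The plan is to verify the three pieces of the statement in sequence: that $\varphi$ is a cocycle, that $\partial\phi$ splits as a product of the two smaller coboundaries, and that the resulting cocyclic matrix is a Kronecker product of Butson matrices (hence itself Butson).

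First I would check that $\varphi \in Z^2(G, \langle \zeta_k\rangle)$: the cocycle identity for $\varphi$ on triples in $G = G_{\bf s}\times G_{\bf t}$ factors coordinatewise into the cocycle identities for $\psi$ and $\rho$ on the projections to $G_{\bf s}$ and $G_{\bf t}$, and the values lie in $\langle \zeta_k\rangle$ because $\langle \zeta_{k_1}\rangle, \langle \zeta_{k_2}\rangle \subseteq \langle \zeta_k\rangle$. Expanding $\partial\phi$ via the definition of $\phi$ then gives the clean identity
\[
\partial\phi((g_s, g_t),(h_s, h_t)) = \partial\phi_{\bf s}(g_s, h_s)\,\partial\phi_{\bf t}(g_t, h_t),
\]
from which
\[
(\varphi\partial\phi)((g_s, g_t),(h_s, h_t)) = (\psi\partial\phi_{\bf s})(g_s, h_s)\cdot (\rho\partial\phi_{\bf t})(g_t, h_t).
\]
Reading this off with rows and columns of $M_{\varphi\partial\phi}$ indexed lexicographically by $G_{\bf s}\times G_{\bf t}$ identifies the cocyclic matrix exactly as the Kronecker product $[\psi\partial\phi_{\bf s}]\otimes[\rho\partial\phi_{\bf t}]$.

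Orthogonality is then immediate from standard Kronecker-product arithmetic: if $A = [\psi\partial\phi_{\bf s}]$ and $B = [\rho\partial\phi_{\bf t}]$, then $(A\otimes B)(A\otimes B)^* = (AA^*)\otimes(BB^*) = |G_{\bf s}||G_{\bf t}|\, I = |G|\, I$. The entries of $A \otimes B$ are products of a $k_1$th root of unity with a $k_2$th root of unity, which lie in $\langle \zeta_k\rangle$ since $k = \mathrm{lcm}(k_1, k_2)$; hence $M_{\varphi\partial\phi}\in \BH(|G|, k)$ and $\varphi\partial\phi$ is orthogonal.

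The only mild obstacle is book-keeping: elements of $G$ are written in the multiplicative-looking form $g_s g_t$ while $G_{\bf s}$ and $G_{\bf t}$ are additive, so one has to be careful to align this notation with the direct-product indexing required to interpret the entrywise product above as a Kronecker product. Once the indexing convention is fixed, the argument reduces to the direct symbolic verification sketched here together with the one-line orthogonality check.
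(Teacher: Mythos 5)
Your proof is correct and complete: the coordinatewise factorization of the cocycle identity, the splitting $\partial\phi((g_s,g_t),(h_s,h_t))=\partial\phi_{\bf s}(g_s,h_s)\,\partial\phi_{\bf t}(g_t,h_t)$, the identification of the cocyclic matrix with the Kronecker product under lexicographic indexing, and the computation $(A\otimes B)(A\otimes B)^*=(AA^*)\otimes(BB^*)=|G|I$ together establish everything claimed. The paper itself supplies no proof of this proposition --- it is quoted from \cite[Theorem 15.8.4]{ADT} --- and your direct verification is precisely the standard argument underlying that citation, so there is nothing to fault.
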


\begin{corollary}
Let $h = q$ be prime. 
If there exist symmetric cocyclic 
matrices in $\mathrm{BH}(q^m,h)$ 
and $\mathrm{BH}(q^n,h)$, 
corresponding to a $GPhA(q^m)$ of 
type ${\bf 1}$ and 
a $GPhA(q^n)$ of type ${\bf 1}$,
respectively, then there 
exists a symmetric cocyclic matrix 
in $\mathrm{BH}(q^{m+n},h)$ 
corresponding to 
a $GPhA(q^{m+n})$ of type ${\bf 1}$.
\end{corollary}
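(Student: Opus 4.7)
The plan is to build the desired array by applying the Kronecker construction of Proposition~\ref{KroneckerBuild} to the two given cocycles, and then to translate back into the language of arrays via Theorem~\ref{mainaef}(a). Let $\phi_{\bf s}\colon \Z_q^m\to \Z_h$ and $\phi_{\bf t}\colon \Z_q^n\to \Z_h$ be the given $\mathrm{GPhA}$s of type $\mathbf{1}$, and write $\mu^{(m)}_{\bf 1}\in Z^2(\Z_q^m,\langle \zeta_h\rangle)$ and $\mu^{(n)}_{\bf 1}\in Z^2(\Z_q^n,\langle \zeta_h\rangle)$ for the corresponding cocycles from Proposition~\ref{pro-fz}. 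By Theorem~\ref{mainaef}(a), $\mu^{(m)}_{\bf 1}\partial\phi_{\bf s}$ and $\mu^{(n)}_{\bf 1}\partial\phi_{\bf t}$ are symmetric and orthogonal. I view $\phi_{\bf s}$, $\phi_{\bf t}$ multiplicatively, as in Remark~\ref{CYAMultorAdd}.

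Now I invoke Proposition~\ref{KroneckerBuild} with $k_1=k_2=h=k$, $\psi=\mu^{(m)}_{\bf 1}$, and $\rho=\mu^{(n)}_{\bf 1}$. This produces an orthogonal cocycle $\varphi\partial\phi\in Z^2(\Z_q^{m+n},\langle \zeta_h\rangle)$, where $\phi(g_s g_t)=\phi_{\bf s}(g_s)\phi_{\bf t}(g_t)$ and $\varphi$ is defined blockwise from $\mu^{(m)}_{\bf 1}$ and $\mu^{(n)}_{\bf 1}$. The associated cocyclic matrix is the Kronecker product of the two given symmetric Butson matrices, which remains symmetric.

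The crux is to identify $\varphi$ with $\mu^{(m+n)}_{\bf 1}$. By Proposition~\ref{pro-fz}(i),
\[
\mu^{(m)}_{\bf 1}(x,y)=\prod_{i=1}^{m}\gamma_q(x_i,y_i)\quad\text{and}\quad \mu^{(n)}_{\bf 1}(x,y)=\prod_{j=1}^{n}\gamma_q(x_j,y_j),
\]
so the defining formula for $\varphi$ yields exactly the product $\prod_{i=1}^{m+n}\gamma_q$, which is $\mu^{(m+n)}_{\bf 1}$ under the natural identification $\Z_q^m\times\Z_q^n\cong \Z_q^{m+n}$. Hence $\mu^{(m+n)}_{\bf 1}\partial\phi$ is a symmetric orthogonal cocycle, and the implication (i)$\Rightarrow$(ii) of Theorem~\ref{mainaef}(a) delivers $\phi$---equivalently, in additive notation, $\phi(g_s,g_t)\equiv \phi_{\bf s}(g_s)+\phi_{\bf t}(g_t)\pmod{h}$---as a $\mathrm{GPhA}(q^{m+n})$ of type $\mathbf{1}$, with corresponding symmetric matrix in $\BH(q^{m+n},h)$. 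I anticipate no serious obstacle: the only non-trivial bookkeeping step is the last identification $\varphi=\mu^{(m+n)}_{\bf 1}$, which is immediate from the product form in Proposition~\ref{pro-fz}(i).
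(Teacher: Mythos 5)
Your proof is correct and takes essentially the approach the paper intends: the corollary is stated without an explicit proof precisely because it follows from Proposition~\ref{KroneckerBuild} applied to the two symmetric orthogonal cocycles, the identification $\varphi=\mu_{\bf 1}$ on $\Z_q^{m+n}$ via Proposition~\ref{pro-fz}(i), and the equivalence (i)$\Leftrightarrow$(ii) of Theorem~\ref{mainaef}(a). Your bookkeeping (symmetry of the Kronecker product, $h=q$ prime so the hypotheses of Theorem~\ref{mainaef} hold) is exactly what is needed.
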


Example \ref{egbpa} furnishes a 
symmetric orthogonal 
cocycle $\mu_{\bf 1}\partial\phi\in
Z^{2}(\Z_{2}^{3},\Z_{2})$ with nontrivial 
coboundary $\partial \phi$. 
By iteration of
Proposition \ref{KroneckerBuild}, 
Kronecker multiplying
$\mu_{\bf 1}\partial\phi$ by powers of 
$\mu_{\bf 1}\in Z^2(\Z_{2},\Z_{2})$, 
we get a symmetric orthogonal cocycle 
$\mu_{\bf 1}\partial\chi\in
Z^{2}(\Z_{2}^{k},\Z_{2})$ 
for all $k \geq 3$. 
Then $\chi$ is a 
$\mathrm{GPBA}(2^k)$ 
of type ${\bf 1}$, 
and Theorem \ref{mainaef} constructs 
its associated objects. 
Thus, for all $k\geq 3$ there exists 
a map $\Z_2^k\to \Z_2$ whose 
expansion is a GPBF; whereas for 
odd $k$, we recall that no GBF---i.e., 
no bent function---can exist.

\bigskip

\end{document}